\theoremstyle{plain}
\numberwithin{equation}{section}
\newtheorem{theorem}{Theorem}[section]
\newtheorem{lemma}[theorem]{Lemma}
\newtheorem{corollary}[theorem]{Corollary}
\newtheorem{proposition}[theorem]{Proposition}
\theoremstyle{remark}
\newtheorem{remark}[theorem]{Remark}
\newcommand{\N}{{\mathsf N}_{n,\lambda,p}^{k, \beta} }
\newcommand{\M}{{\mathsf M}_{n,\lambda,p}^{k,\alpha, \beta} }
\renewcommand{\P}{{\mathsf P}_{n,\lambda,p}^{\beta} }
\DeclareMathOperator{\R}{\mathbf{R}}
\newcommand{\EE}{\mathcal{E}_{\lambda, k}^{ \beta}}
\newcommand{\RR}{\mathcal{R}_{\lambda, k}^{ \beta}}
\DeclareMathOperator{\F}{\mathbf{F}}
\def\R{\mathbf R}
\numberwithin{equation}{section}
\def\@cite#1#2{[\textbf{#1}\if@tempswa, #2\fi]}
\title[An \textit{optimal} HLS inequality on $\R^{n-k} \times \R^n$ ]{An \textit{optimal} Hardy--Littlewood--Sobolev inequality on $\R^{n-k} \times \R^n$ and its consequences}
\def\cfac#1{\ifmmode\setbox7\hbox{$\accent"5E#1$}\else\setbox7\hbox{\accent"5E#1}\penalty 10000\relax\fi\raise 1\ht7\hbox{\lower1.05ex\hbox to 1\wd7{\hss\accent"13\hss}}\penalty 10000\hskip-1\wd7\penalty 10000\box7 }
\author[Q.A. Ng\^o]{Qu\cfac oc Anh Ng\^o}
\address[Q.A. Ng\^o]{
Graduate School of Mathematical Sciences, The University of Tokyo, 3-8-1 Komaba, Meguro-ku, Tokyo 153-8914, Japan
\\
ORCID iD: 0000-0002-3550-9689}
\email{ngo@ms.u-tokyo.ac.jp, nqanh@vnu.edu.vn}
\author[Q.H. Nguyen]{Quoc-Hung Nguyen}
\address[Q.H. Nguyen]{Institute of Mathematical Sciences, ShanghaiTech University, 393 Middle Huaxia Road, Pudong, Shanghai, 201210, China}
\email{qhnguyen@shanghaitech.edu.cn}
\author[V.H. Nguyen]{Van Hoang Nguyen}
\address[V.H. Nguyen]{Department of Mathematics, FPT University, Hanoi, Vietnam}
\email{vanhoang0610@yahoo.com, hoangnv47@fe.edu.vn}
\begin{document}

\dedicatory{Dedicated to Professor Bidaut-V\'eron and Professor V\'eron on the occasion of their 70th birthday}

\begin{abstract}
For $n > k \geq 0$, $\lambda >0$, and $p, r>1$, we establish the following \textit{optimal} Hardy--Littlewood--Sobolev inequality
\[
\Big| \iint_{\R^n \times \R^{n-k}} \frac{f(x) g(y)}{ |x-y|^\lambda |y''|^\beta} dx dy \Big| 
\lesssim \| f \| _{L^p(\R^{n-k})} \| g\| _{L^r(\R^n)}
\]
with $y = (y', y'') \in \R^{n-k} \times \R^k$ under the two conditions
\[
\beta < 
\left\{
\begin{aligned}
& k - k/r   & & \text{if } \; 0 < \lambda \leq n-k,\\
& n - \lambda - k/r & & \text{if } \; n-k < \lambda,
\end{aligned}
\right.
\]
and
\[
\frac{n-k}n \frac 1p + \frac 1r + \frac { \beta + \lambda} n = 2 -\frac kn.
\]
Remarkably, there is no upper bound for $\lambda$, which is quite different from the case with the weight $|y|^{-\beta}$, commonly known as Stein--Weiss inequalities. We also show that the above condition for $\beta$ is \textit{sharp}. Apparently, the above inequality includes the classical Hardy--Littlewood--Sobolev inequality when $k=0$ and the HLS inequality on the upper half space $\R_+^n$ when $k=1$. In the unweighted case, namely $\beta=0$, our finding immediately leads to the sharp HLS inequality on $\R^{n-k} \times \R^n$ with the \textit{optimal} range $$0<\lambda<n-k/r,$$ which has not been observed before, even for the case $k=1$. Improvement to the Stein--Weiss inequality in the context of $\R^{n-k} \times \R^n$ is also considered. The existence of an optimal pair for this new inequality is also studied.
\end{abstract}

\date{\bf \today \, at \currenttime}

\subjclass[2000]{26D15, 35A23, 42B25}

\keywords{Hardy-Littlewood-Sobolev inequality; Stein-Weiss inequality; sharp constant; optimal function}

\maketitle


\section{Introduction}

In the existing literature, the classical Hardy--Littlewood--Sobolev inequality on $\R^n$, named after Hardy and Littlewood \cite{hl1928,hl1930} and Sobolev \cite{sobolev1938}, states that for any $n\geq 1$, $p,r > 1$, and $\lambda \in (0,n)$ satisfying the balance condition
\begin{equation}\label{eq-HLS-BC-whole}
1 /p + 1 /r +\lambda /n =2,
\end{equation}
there exists a sharp constant $N>0$ depending on $n$, $\lambda$, and $p$ such that
\begin{equation}\label{eq-HLS-whole}
\Big |\iint_{\R^n \times \R^n} \frac{f(x)g(y)}{|x-y|^\lambda} dx dy\Big| \leq N \| f \| _{L^p(\R^n )} \| g\| _{L^r(\R^n)}
\end{equation}
for any $f\in L^p(\R^n)$ and $g\in L^r(\R^n)$. The inequality \eqref{eq-HLS-whole} is also referred to as the weak form of the classical Young inequality 
\[
\Big |\int_{\R^n} f(x) (h*g)(x) dx\Big| \lesssim \| f \| _{L^p(\R^n )} \| h\| _{L^q(\R^n)} \| g\| _{L^r(\R^n)}
\]
with $p,q,r \geq 1$ and 
\[
1/p+1/q+1/r=2,
\] 
since $|x|^{-\lambda}$ belongs to the weak space $L_w^{n/\lambda} (\R^n)$; see \cite[Chapter 4]{LiebLoss}. Here and in what follows, by $\lesssim$ and $\gtrsim$ we mean inequalities up to universal constants such as $n$, $\lambda$, $p$, $r$, etc. 

Although the rough form of \eqref{eq-HLS-whole} 
was proved rather earlier, it took quite a long time to find the their sharp form until a seminal work of Lieb in 1983; see \cite{l1983}. Among other things, Lieb proved the existence of the optimal functions to the inequality \eqref{eq-HLS-whole} 
and compute the sharp constant $N$ in several special cases.

In the last two decades, the sharp HLS inequality \eqref{eq-HLS-whole} 
has captured the attention of many mathematicians and many remarkable results have already been drawn. For example, there are new methods to prove the inequality \eqref{eq-HLS-whole} and new arguments to prove the existence of the optimal functions; see \cite{Lions, CarlenLoss92, LiebLoss, fl2010, ccl2010, fl2011, fl2012b, DouQuoZhu}. In addition, one has the sharp HLS inequalities on the upper half space $\R_+^n = \R^{n-1} \times (0, +\infty)$ in \cite{DouZhu-upper, Dou2016, Gluck2020}, on bounded domains in \cite{GZ2019}, on the Heisenberg group in \cite{fl2012a, HLZ12}, and on compact Riemannian manifolds in \cite{hanzhu}. The interaction between the HLS inequality and other important inequalities has also been exploited; see \cite{beckner1993, DJ14, JN14}.

In this work, we look for a possible weighted HLS inequality on $\R^{n-k} \times \R^n$ of the following form
\begin{equation}\label{eq-wHLS-mixed}
\Big| \iint_{\R^n \times \R^{n-k}} \frac{f(x) g(y)}{ |x-y|^\lambda |y''|^\beta} dx dy \Big| 
\lesssim \| f \| _{L^p(\R^{n-k})} \| g\| _{L^r(\R^n)},
\end{equation}
where $k$ is a non-negative integer less than $n$, $x \in \R^{n-k}$, $y = (y',y'') \in \R^{n-k} \times \R^k$, and the ``distance'' $|x-y|$ is being understood as follows
\[
|x-y| = \sqrt{|x-y'|^2 + |y''|^2}.
\]
There is a number of reasons supporting us to work on the weighted HLS inequality \eqref{eq-wHLS-mixed}. For clarity, let us just mention a few connection between \eqref{eq-wHLS-mixed} and some known results, while a detailed discussion and interesting consequences will be exploited in subsection \ref{sec-Discussion} below. Clearly, the inequality \eqref{eq-wHLS-mixed} with $ \beta= 0$, if true, becomes \eqref{eq-HLS-whole} if $k=0$. In the case $k=1$, if we let $g$ be such that $g \equiv 0$ on the lower half space $\R_-^n = \R^{n-1} \times (-\infty, 0)$, then \eqref{eq-HLS-whole} becomes
\begin{equation}\label{eq-HLS-upper}
\Big |\iint_{\R_+^n \times \R^{n-1}} \frac{f(x)g(y)}{|x-y|^\lambda} dx dy\Big| \lesssim \| f \| _{L^p(\R^{n-1})} \| g\| _{L^r(\R_+^n)}.
\end{equation}
Inequality \eqref{eq-HLS-upper} is known that the HLS inequality on the upper half space $\R_+^n$ first proved by Dou and Zhu in \cite{dz2014} under the balance condition
\begin{equation}\label{eq-Identity-upper}
(n-1)/(n p) +1/r + \lambda /n =2 - 1/n.
\end{equation}
Relaxing the condition $\beta = 0$ gives
\begin{equation}\label{eq-HLSe-upper}
\Big |\iint_{\R_+^n \times \R^{n-1}} \frac{f(x)g(y)}{|x-y|^\lambda y_n^\beta} dx dy\Big| \lesssim \| f \| _{L^p(\R^{n-1})} \| g\| _{L^r(\R_+^n)},
\end{equation}
which was proved by Gluck in \cite{Gluck2020} and Liu in \cite{Liu2020} under the technical condition $\beta \leq 0$ and the balance condition
\begin{equation}\label{eq-BCe-upper}
(n-1)/(n p) +1/r  + (\lambda + \beta)/ n =2 - 1/n.
\end{equation}
On one hand, the restriction $\beta \leq 0$ in \eqref{eq-HLSe-upper} seems to be not natural from the validity of the inequality. This indicates that \eqref{eq-HLSe-upper} could be true for certain $\beta>0$. On the other hand, since the balance conditions \eqref{eq-HLS-BC-whole}, \eqref{eq-Identity-upper}, and \eqref{eq-BCe-upper} have a similar form, it is natural to ask whether or not there is a unification of \eqref{eq-HLS-whole}, \eqref{eq-HLS-upper}, and \eqref{eq-HLSe-upper}. In this paper, we aim to address these points and we are successful leading to the inequality \eqref{eq-wHLS-mixed} above. 

That said, in this work, we aim to study \eqref{eq-wHLS-mixed}. Toward a complete picture of \eqref{eq-wHLS-mixed}, our first step concerns to the validity of \eqref{eq-wHLS-mixed}. We summarize this step as the following theorem.

\begin{theorem}[weighted HLS inequality on $\R^{n-k} \times \R^n$]\label{thm-MAIN}
Let $n \geq 1$, $0\leq k <n$, $\lambda >0$, $p, r>1$, and
\begin{equation}\label{eq-Beta}
\beta < 
\left\{
\begin{aligned}
& k -\frac  kr   & & \text{if } \; 0 < \lambda \leq n-k,\\
& n - \lambda - \frac kr & & \text{if } \; n-k < \lambda  ,\\
\end{aligned}
\right.
\end{equation}
satisfying the balance condition
\begin{equation}\label{eq-Identity}
\frac{n-k}n \frac 1p + \frac 1r + \frac { \beta + \lambda} n = 2 -\frac kn.
\end{equation}
Then there exists a sharp constant $\N > 0$ such that
\begin{subequations}\label{eq-MAIN}
\begin{align}
\Big| \iint_{\R^n \times \R^{n-k}} \frac{f(x) g(y)}{ |x-y|^\lambda |y''|^\beta } dx dy \Big| 
\leq \N \| f \| _{L^p(\R^{n-k})} \| g\| _{L^r(\R^n)}
\tag*{ \eqref{eq-MAIN}$_{k,\beta}$}
\end{align}
\end{subequations}
for any functions $f\in L^p(\R^{n-k})$ and $g\in L^r(\R^n)$. Moreover, the two conditions $0<\lambda < n$ and \eqref{eq-Beta} are sharp. 
\end{theorem}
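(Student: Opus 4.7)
The plan is to proceed by duality and a slicing analysis in the $y''$ direction. Dualising against $g \in L^r(\R^n)$, the desired inequality is equivalent to the weighted bound
\[
\Big\| |y''|^{-\beta} (K_{y''} \ast f)(y') \Big\|_{L^{r'}(\R^n)} \lesssim \|f\|_{L^p(\R^{n-k})},
\]
where, for each fixed $y'' \in \R^k$, the convolution kernel on $\R^{n-k}$ is $K_{y''}(z) := (|z|^2 + |y''|^2)^{-\lambda/2}$ and enjoys the exact scaling $K_{y''}(z) = |y''|^{-\lambda} K_1(z/|y''|)$. A direct computation then yields $\|K_{y''}\|_{L^{(n-k)/\lambda,\infty}(\R^{n-k})} \sim 1$ uniformly in $y''$, while $\|K_{y''}\|_{L^s(\R^{n-k})} \sim |y''|^{(n-k)/s - \lambda}$ for any $s > (n-k)/\lambda$.

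Next I would invoke the generalised Young inequality of O'Neil in Lorentz spaces, combined with the above estimates on $K_{y''}$, to obtain, for a suitably chosen index $q$, the pointwise-in-$y''$ bound
\[
\|K_{y''} \ast f\|_{L^{q,\infty}(\R^{n-k})} \lesssim |y''|^{\kappa} \|f\|_{L^p(\R^{n-k})},
\]
where $\kappa = \kappa(p,q,\lambda,n,k)$ is fixed by the Young balance. Raising to the $r'$-th power, multiplying by $|y''|^{-\beta r'}$, and integrating over $y'' \in \R^k$ in polar coordinates reduces the original estimate to the integrability of $|y''|^{r'(\kappa - \beta) + (k-1)}$ at both the origin and infinity. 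The balance relation \eqref{eq-Identity} is precisely what makes this exponent land on the borderline, and the two regimes of \eqref{eq-Beta} correspond to the two endpoints: when $\lambda \leq n-k$ the local smoothing of $K_{y''}$ is the bottleneck and the constraint comes from $|y''| \to 0$, whereas when $\lambda > n-k$ the tail of $K_{y''}$ dominates and the constraint comes from $|y''| \to \infty$. Passage from the weak $L^{r'}$ bound to the strong $L^{r'}$ bound will be obtained by interpolating between two perturbed choices of $(p, r, \beta)$, all still satisfying \eqref{eq-Identity}, via Marcinkiewicz.

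For sharpness, the balance condition \eqref{eq-Identity} is forced by the dilation symmetry $(f, g) \mapsto (f(\sigma \cdot), g(\sigma \cdot))$, under which both sides of the inequality must scale by the same power of $\sigma > 0$. To prove the sharpness of \eqref{eq-Beta}, I would test with explicit families of functions concentrating at one end of the $y''$ axis: for $\lambda \leq n-k$ and $\beta \geq k - k/r$, take $g$ supported in a thin cylinder $\{|y''| \leq \varepsilon\}$ and $f$ matched to it; the singular weight $|y''|^{-\beta}$ then produces a divergence in the LHS that the scale-invariant RHS cannot absorb as $\varepsilon \to 0$. For $\lambda > n-k$ and $\beta \geq n - \lambda - k/r$, the analogous obstruction is exposed by test pairs concentrated at large $|y''|$, where the kernel tail becomes critical. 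In each regime the exponent at which the test family blows up matches the threshold in \eqref{eq-Beta}.

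The main obstacle is expected to be the interpolation step between the weak-type Lorentz estimate from O'Neil and the desired strong-type inequality, and in particular, handling the boundary case $\lambda = n-k$, where a logarithmic gain appears and requires a dedicated argument rather than a direct computation. A secondary technical point is the uniform treatment of the two regimes in \eqref{eq-Beta}; a dyadic decomposition of $K_{y''}$ according to $|z| \lesssim |y''|$ versus $|z| \gtrsim |y''|$ should allow both cases to be handled under a single scheme of bookkeeping.
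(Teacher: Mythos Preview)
Your slicing scheme has a genuine gap at exactly the step you flag as the ``main obstacle,'' and Marcinkiewicz does not rescue it. Once you bound $\|K_{y''}\ast f\|_{L^{q}(\R^{n-k})}$ (or even $\|K_{y''}\ast f\|_{L^{q,\infty}}$) by $|y''|^{\kappa}\|f\|_{L^p}$ and then integrate $|y''|^{(\kappa-\beta)r'}$ over $\R^k$, scale invariance forces the radial exponent to be \emph{exactly} $-1$: that is precisely what the balance condition \eqref{eq-Identity} encodes. So the $y''$-integral diverges logarithmically at both ends, not just at one. Perturbing $(p,r,\beta)$ along the balance manifold does not help, because the same scaling argument puts every such choice on the same borderline; and perturbing \emph{off} the balance manifold gives estimates with the wrong homogeneity, so there is no weak-type endpoint to interpolate from. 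In short, any argument that first freezes $y''$, bounds the slice purely in terms of $\|f\|_{L^p}$, and only afterwards integrates in $y''$ is doomed by scaling --- information about where $f$ lives in $\R^{n-k}$ must interact with the $y''$-integration.

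The paper's proof supplies exactly this coupling. One writes $|x-y|^{-\lambda}=\lambda\int_{|x-y|}^{\infty}\rho^{-\lambda-1}\,d\rho$, uses Fubini on the cone $\{\rho>|x-y'|+|y''|\}$ so that the $\rho$- and $y''$-integrations intertwine, and after a H\"older step and an elementary dyadic lemma arrives at
\[
\int_{\R^{n}}\Big(\int_{\R^{n-k}}\frac{f(x)}{|x-y|^\lambda|y''|^\beta}\,dx\Big)^{q}dy
\;\lesssim\;
\int_{\R^{n-k}}\Big(\int_{\R^{n-k}}\frac{f(x)}{|z-x|^{\gamma}}\,dx\Big)^{q}dz,
\qquad \gamma=\lambda-\tfrac{k}{q}+\beta.
\]
The point is that \eqref{eq-Beta} is \emph{equivalent} to $0<\gamma<n-k$, so the right-hand side is controlled by the classical HLS inequality on $\R^{n-k}$. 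This reduction to a lower-dimensional unweighted HLS is the missing idea; it also explains why there is no upper bound on $\lambda$ and why the case $\lambda=n-k$ requires no separate treatment.

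On sharpness: your intuition that the obstruction for $\lambda>n-k$ comes from large $|y''|$ is not correct. In both regimes the failure is local, at $|y''|\to 0$. For $\beta\ge k-k/r$ one simply tests $f=\chi_{B_1^{n-k}}$ and observes that $\int_{B_1^k}|y''|^{-\beta q}\,dy''=+\infty$. For $\lambda>n-k$ and $\beta\ge n-\lambda-k/r$, a lower bound of the form $(\EE f)(y)\gtrsim |y''|^{-\beta-\lambda}\int_{B^{n-k}_{|y''|}(y')}f$ (obtained from the same integral representation) shows, with $f=\chi_{B_6^{n-k}}$, that the divergence is $\int_0^1\rho^{-(\lambda+\beta-k/q-(n-k))q-1}\,d\rho$, again at small $\rho=|y''|$.
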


Before moving on, Theorem \ref{thm-MAIN} deserves some comments. First, it is important to note that there is no upper bound for $\lambda$, namely \eqref{eq-MAIN}$_{k,\beta}$ holds for all $\lambda > 0$ so long as $\beta$ is suitably small. Next we list a few further comments.

\begin{itemize}
  \item When $k=0$, the weight $|y''|^{-\beta}$ does not appear, hence the inequality \eqref{eq-MAIN}$_{0,\beta}$ becomes the classical HLS inequality \eqref{eq-HLS-whole} on $\R^n$. 
  
  \item When $k=1$, the inequality \eqref{eq-MAIN}$_{1,\beta}$ is essentially the same as that on $\R_+^n$ except the fact that the domain of the double integration is no longer $\R_+^n$ but the whole space $\R^n$. As a matter of fact, \eqref{eq-MAIN}$_{1,\beta}$ deals with a larger class of functions. But the sharp constant ${\mathsf N}_{n,\lambda,p}^{1,0}$ and that of \eqref{eq-HLS-upper} are related; see subsection \ref{subsec-application-extended} below.
  
  \item In all existing works on $\R_+^n$ in the literature, the condition $0< \lambda< n-1$ is always assumed. But our optimal inequality \eqref{eq-MAIN}$_{k,\beta}$ shows that this is not necessary. If we let $\beta = 0$ and $k=1$, then \eqref{eq-MAIN}$_{1,0}$ holds for $0<\lambda<n-1/r$ but does not if $n-1/r \leq \lambda < n$; see subsection \ref{subsec-application-optimal} below.
  
  \item An interesting consequence of \eqref{eq-MAIN}$_{k,\beta}$ is that it holds for any $\lambda >0$, not just $0<\lambda<n$, so long as $\beta < n-\lambda-k/r$; see subsection \ref{subsec-application-k->1} below.
    
  \item Although it is not explicitly stated in Theorem \ref{thm-MAIN}, there is a lower bound for $\beta$ because it can be easily seen from \eqref{eq-Identity}. To be more precise, we must have $\beta > -\lambda$, thanks to $p, r>1$. 
  
  \item Our inequality \eqref{eq-MAIN}$_{1,\beta}$ remains valid if we replace $|y''|^{-\beta}$ by $|y|^{-\beta}$. But in this scenario, there are some minor changes including the condition for $\lambda$; see subsection \ref{subsec-application-wHLS->SW} below. We leave this for future research.
\end{itemize}

More interesting applications of \eqref{eq-MAIN}$_{k,\beta}$ will be discussed in section \ref{sec-Discussion}. From now on, we call \eqref{eq-MAIN}$_{k,\beta}$ the \textit{optimal} HLS inequality to highlight the fact that all parameters for \eqref{eq-MAIN}$_{k,\beta}$ are in the optimal range.

As routine, the proof Theorem \ref{thm-MAIN} is carried through two steps; see section \ref{sec-RoughInequality} below. In the first step, we prove a rough form of \eqref{eq-MAIN}$_{k,\beta}$, namely without the sharp constant $\N$. Then the existence of the sharp constant $\N $ is guaranteed through the following variational problem
\begin{equation}\label{eq-VariationalProb}
\N :=\sup_{f \geq 0, g \geq 0} \big\{ \F_{\lambda,k}^{ \beta} (f,g) : \| f\| _{L^p(\R^{n-k})} =1, \| g \| _{L^p(\R^n)} = 1 \big\},
\end{equation}
where
\[
\F_{\lambda,k}^{ \beta} (f,g) = \iint_{\R^n \times \R^{n-k}} \frac{f(x) g(y)}{ |x-y|^\lambda |y''|^\beta} dx dy.
\]
As we shall soon see, in the present work, we present two different proofs for the rough inequity. These new proofs do not make use the layer cake representation technique nor the Marcinkiewicz interpolation inequality. Instead, we borrow some ideas from harmonic analysis and the theory of maximal functions.

Once we establish Theorem \ref{thm-MAIN}, it is natural to ask whether an \textit{optimal} pair $(f^\sharp, g^\sharp)$ for the weighted HLS inequality \eqref{eq-MAIN}$_{k,\beta}$, which consists of non-negative, non-trivial functions, actually exists, namely
\[
\iint_{\R^n \times \R^{n-k}} \frac{f^\sharp (x) g^\sharp (y)}{ |x-y|^\lambda |y''|^\beta } dx dy 
= \N \| f^\sharp\| _{L^p(\R^{n-k})} \| g^\sharp \| _{L^r(\R^n)}.
\]
To this purpose, let us first formally introduce an ``extension'' operator $\EE $, which turns a function $f$ on $\R^{n-k}$ to a function on $\R^n$ via the following rule
\begin{align*}
\EE [f] (y)=\int_{\R^{n-k}}\frac{f(x)dx}{ |x-y|^\lambda |y''|^\beta} \quad \text{ a.e. } \;y \in \R^n.
\end{align*}
Using this operator, we may rewrite \eqref{eq-MAIN}$_{k,\beta}$ as
\[
\Big| \int_{\R^n} (\EE [f]) (y) g(y) dy \Big| \leq \N \| f \| _{L^p(\R^{n-k})} \| g\| _{L^r(\R^n)}.
\]
Then, by duality, the HLS inequality \eqref{eq-MAIN}$_{k,\beta}$ is \textit{equivalent} to the following inequality
\begin{equation}\label{eq-MAIN-E}
\Big\|\int_{\R^{n-k}}\frac{f(x)dx}{ |x- \cdot |^\lambda |\cdot ''|^\beta} \Big\| _{L^q(\R^n)} \leq \N \| f \| _{L^p(\R^{n-k})}
\end{equation}
for any function $f\in L^p(\R^{n-k})$ with $q $ being the number
\begin{equation}\label{eq-CONDITION-E}
\frac1q = 1 - \frac 1r = \frac{n-k}n \Big( \frac1p - \frac{n-k-\lambda - \beta} {n-k} \Big) .
\end{equation}
It is important to note that $q>p$, see \eqref{eq-q>p} below. 

Similarly, one can consider the ``restriction" operator $\RR$, which maps a function $g$ on $\R^n$ to a function on $\R^{n-k}$ via the following rule
\[
\RR [g] (x) = \int_{\R^n} \frac{ g(y) dy }{|x-y|^\lambda |y''|^\beta} \quad \text{ a.e. } \;x \in \R^{n-k}.
\]
Clearly, the two operators $\EE$ and $\RR$ are dual in the sense that for any functions $f$ on $\R^{n-k}$ and $g$ on $\R^n$, the following identity
\[
\int_{\R^n} (\EE [f])(y) g(y) dy = \int_{\R^{n-k}} f(x) (\RR [g])(x) dx
\]
holds, thanks to Tonelli's theorem. Once we introduce $\RR$, we can easily see that the weighted HLS inequality \eqref{eq-MAIN}$_{k,\beta}$ is also \textit{equivalent} to the following inequality
\begin{equation*}\label{eq-MAIN-R}
\Big\| \int_{\R^n} \frac{ g(y ) dy}{|\cdot - y|^\lambda |y''|^\beta} \Big\|_{L^q (\R^{n-k})} 
\leq \N \|g\|_{L^r(\R^n)}
\end{equation*}
for any function $g\in L^r( \R^n)$ with $q>1$ satisfies
\begin{equation*}\label{eq-CONDITION-R}
\frac1q = 1 - \frac 1p = \frac n{n-k} \Big( \frac1r - \frac{n-\lambda-\beta} {n} \Big) .
\end{equation*}

Now we turn our attention to the existence of optimal pairs $(f^\sharp,g^\sharp)$ for the variational problem \eqref{eq-VariationalProb}. In view of \eqref{eq-MAIN-E}, to study the existence of optimal pairs for \eqref{eq-VariationalProb}, we study the following maximizing problem
\begin{equation}\label{eq-VariationalProb-E}
\N :=\sup_{f \geq 0} \big\{ \big\| \EE [f] \big\| _{L^q(\R^n)} : \| f\| _{L^p(\R^{n-k})} =1 \big\}.
\end{equation}
Clearly, the two maximizing problems \eqref{eq-VariationalProb} and \eqref{eq-VariationalProb-E} are actually equivalent; see section \ref{subsec-Equivalence} below. In the next result, we prove that the maximizing problem \eqref{eq-VariationalProb-E} always admits a solution $f^\sharp \in L^p (\R^{n-k})$, thus giving a solution $(f^\sharp, (\EE [f^\sharp])^{q-1})$ to the maximizing problem \eqref{eq-VariationalProb}.

\begin{theorem}[existence of optimal functions for \eqref{eq-VariationalProb-E}]\label{thm-EXISTENCE}
Suppose that all conditions in Theorem \ref{thm-MAIN} hold. Let $q$ be given by \eqref{eq-CONDITION-E}. Then, there exists a function $f^\sharp\in L^p(\R^{n-k})$ such that
\[
f^\sharp \geq 0, \quad
 \| f^\sharp\| _{L^p(\R^{n-k})} =1, \quad \text{and} \quad 
\big \| \EE [f^\sharp] \big\| _{L^q(\R^n)} = \N .
\] 
Moreover, the function $f^\sharp$ is strictly decreasing and radially symmetric with respect to some point in $\R^{n-k}$.
\end{theorem}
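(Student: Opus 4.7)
The plan is the direct method of the calculus of variations. Start from a non-negative maximizing sequence $\{f_j\}\subset L^p(\R^{n-k})$ with $\|f_j\|_{L^p(\R^{n-k})}=1$ and $\|\EE[f_j]\|_{L^q(\R^n)}\to\N$. Two preparatory reductions are needed: a symmetrization to reduce to radial decreasing profiles, and a scaling normalization to kill the dilation freedom built into \eqref{eq-Identity}. For the first, rewrite the kernel as
\[
\frac{1}{|x-y|^\lambda|y''|^\beta}=\frac{1}{|y''|^\beta}\bigl(|x-y'|^2+|y''|^2\bigr)^{-\lambda/2},
\]
which, for each fixed $y''\in\R^k$, is a symmetric decreasing function of $x-y'$ on $\R^{n-k}$. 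Writing $\|\EE[f]\|_{L^q(\R^n)}$ by duality as the supremum of $\F_{\lambda,k}^{\beta}(f,g)$ over $g\ge 0$ with $\|g\|_{L^r(\R^n)}=1$, and applying the Riesz rearrangement inequality on $\R^{n-k}$ slice-by-slice in $y''$, shows that $f_j$ may be replaced by its symmetric decreasing rearrangement $f_j^*$ on $\R^{n-k}$ without decreasing $\|\EE[f_j]\|_{L^q}$. For the second, \eqref{eq-Identity} together with \eqref{eq-CONDITION-E} renders $\|\EE[\cdot]\|_{L^q}/\|\cdot\|_{L^p}$ invariant under $f(x)\mapsto t^{(n-k)/p}f(tx)$, and we use this one-parameter freedom to normalize, say, $|\{f_j>1\}|=1$. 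Combined with the pointwise bound $f(x)\lesssim|x|^{-(n-k)/p}$ valid for any symmetric decreasing $L^p$-function of unit norm, this produces a locally integrable majorant on $\R^{n-k}\setminus\{0\}$.

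By reflexivity of $L^p$ and Helly's selection principle applied to the radial profiles of $f_j$, extract along a subsequence $f_j\rightharpoonup f^\sharp$ in $L^p(\R^{n-k})$ and pointwise almost everywhere on $\R^{n-k}\setminus\{0\}$. The limit $f^\sharp$ is symmetric decreasing with $\|f^\sharp\|_{L^p}\le 1$, and the normalization forces $f^\sharp\not\equiv 0$. Dominated convergence against the majorant gives $\EE[f_j]\to\EE[f^\sharp]$ a.e.\ on $\R^n$, so the Brezis--Lieb lemma applies in both $L^p$ and $L^q$ (using the linearity $\EE[f_j]-\EE[f^\sharp]=\EE[f_j-f^\sharp]$) to give
\[
\|f_j\|_{L^p}^p=\|f^\sharp\|_{L^p}^p+\|f_j-f^\sharp\|_{L^p}^p+o(1),
\]
\[
\|\EE[f_j]\|_{L^q}^q=\|\EE[f^\sharp]\|_{L^q}^q+\|\EE[f_j-f^\sharp]\|_{L^q}^q+o(1).
\]
Passing to a further subsequence and writing $a=\|f^\sharp\|_{L^p}$, $b=\lim\|f_j-f^\sharp\|_{L^p}$, the first identity gives $a^p+b^p=1$, and bounding both terms on the right of the second by \eqref{eq-MAIN-E} yields
\[
\N^q\le\N^q\bigl(a^q+b^q\bigr)\le\N^q\bigl(a^p+b^p\bigr)^{q/p}=\N^q,
\]
where the middle inequality uses $q>p$. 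All inequalities must therefore be equalities, so $a^p+b^p=a^q+b^q=1$; since the map $(u,v)\mapsto u^{q/p}+v^{q/p}$ on $u+v=1$ reaches $1$ only at the corners and $a>0$, this forces $a=1$, $b=0$. Hence $\|f^\sharp\|_{L^p}=1$, $f_j\to f^\sharp$ strongly in $L^p(\R^{n-k})$, and by the continuity of $\EE\colon L^p\to L^q$ we conclude $\|\EE[f^\sharp]\|_{L^q}=\N$.

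Finally, the strict radial monotonicity of $f^\sharp$ is extracted from the Euler--Lagrange relation
\[
\N^q\,(f^\sharp)^{p-1}=\RR\bigl[(\EE[f^\sharp])^{q-1}\bigr]\quad\text{on }\R^{n-k},
\]
obtained by differentiating the variational functional at $f^\sharp$; the right-hand side is a strictly positive radial function whose defining kernel in $x$ is strictly decreasing in $|x|$, hence it is itself strictly decreasing, and taking $(p-1)$-th roots transfers the property to $f^\sharp$. The main obstacle throughout is the passage to the limit: the one-parameter dilation symmetry admits both concentration at the origin and escape to infinity as failure modes, and the Brezis--Lieb dichotomy together with the strict inequality $q>p$ coming from \eqref{eq-CONDITION-E} is what rules out both simultaneously.
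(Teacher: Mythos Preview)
Your overall plan---symmetrize by Riesz rearrangement in the $y'$-slices, use the dilation invariance to normalize, extract a pointwise limit by Helly, and finish with Brezis--Lieb together with the strict inequality $q>p$---is exactly the paper's architecture, and the Brezis--Lieb/convexity endgame you wrote is correct once you know $a:=\|f^\sharp\|_{L^p}>0$. The gap is precisely there. Your normalization $|\{f_j>1\}|=1$ is in fact \emph{impossible} under $\|f_j\|_{L^p}=1$: if $|\{f_j>1\}|=1$ then $\int_{\{f_j>1\}}f_j^p>1$, already exceeding $\|f_j\|_{L^p}^p$. More importantly, no ``kinematic'' choice of dilation can by itself force $f^\sharp\not\equiv0$. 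The obstruction is that the quantity $a_j:=\sup_{r>0}r^{(n-k)/p}f_j(r)$ is \emph{dilation-invariant}; if $a_j\to0$ along the maximizing sequence, then the uniform bound $f_j(r)\le a_j\,r^{-(n-k)/p}$ forces $f_j\to0$ pointwise no matter how you rescale, and the whole argument collapses at the line ``the normalization forces $f^\sharp\not\equiv0$''.

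The paper supplies the missing analytic input via Lemma~\ref{lem-FarAwayFromZero}: for non-negative $f\in L^p(\R^{n-k})$,
\[
\big\|\EE[f]\big\|_{L^q(\R^n)}^q\ \lesssim\ \|f\|_*^{\,q-p}\,\|f\|_{L^p(\R^{n-k})}^p,
\qquad
\|f\|_*:=\sup_{x\in\R^{n-k},\ \rho>0}\rho^{-(n-k)/p'}\int_{B^{n-k}_\rho(x)}f,
\]
proved by an Adams-type maximal-function argument. Applied along the maximizing sequence this yields $\|f_j\|_*\gtrsim1$, hence $a_j\gtrsim1$ uniformly; only then can one rescale (the paper chooses $\lambda_j$ so that $f_j(1)>c_0$) and conclude $f^\sharp\not\equiv0$. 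After inserting this step your argument goes through. Your Euler--Lagrange route to strict monotonicity is a legitimate alternative to the paper's appeal to \cite[Lemma~2.2]{l1983}/Anderson's theorem, but it is a post-existence step and should be placed after the maximizer has been obtained.
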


We prove Theorem \ref{thm-EXISTENCE} in section \ref{sec-ExistenceOptimal} below. This is done by following Talenti's proof of the sharp Sobolev inequality by considering \eqref{eq-VariationalProb-E} within the set of symmetric decreasing rearrangements. In view of the constraint in the maximizing problem \eqref{eq-VariationalProb-E}, if we denote by $f^\star$ the symmetric decreasing rearrangement with respect to $\R^{n-k}$ of a function $f \in L^p(\R^{n-k})$, then on one hand, it is well-known that 
$$\| f^\star\| _{L^p(\R^{n-k})} = \| f\| _{L^p(\R^{n-k})}$$ 
while on the other hand, there holds
\[
\big\| \EE [f] \big\| _{L^q(\R^n)} \leq \big\| \EE [f ^\star ] \big\| _{L^q(\R^n)} ;
\] 
see \eqref{eq-RieszE} below. Hence, it suffices to look for an optimal function within the set of symmetric decreasing rearrangements. 

A quick consequence of Theorem \ref{thm-EXISTENCE} is the following.

\begin{proposition}[existence of optimal pairs for \eqref{eq-VariationalProb}]\label{prop-EXISTENCE}
Assume all conditions in Theorem \ref{thm-MAIN}. Then, the sharp constant $\N $ for the inequality \eqref{eq-MAIN}$_{k,\beta}$ is achieved by some optimal pair $(f^\sharp, g^\sharp) \in L^p (\R^{n-k}) \times L^r (\R^n) $. The functions $f^\sharp$ and $g^\sharp \big|_{\R^{n-k}}$ are radially symmetric with respect to some point in $\R^{n-k}$ and monotone decreasing.
\end{proposition}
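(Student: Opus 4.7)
The plan is to obtain Proposition~\ref{prop-EXISTENCE} as a direct consequence of Theorem~\ref{thm-EXISTENCE} together with the duality pairing between $\EE$ and $\RR$. First, I would apply Theorem~\ref{thm-EXISTENCE} to produce a non-negative function $f^\sharp \in L^p(\R^{n-k})$ with $\|f^\sharp\|_{L^p(\R^{n-k})} = 1$, $\|\EE[f^\sharp]\|_{L^q(\R^n)} = \N$, and such that $f^\sharp$ is radially symmetric and strictly decreasing about some point $x_0 \in \R^{n-k}$, where $q$ is given by \eqref{eq-CONDITION-E}. Since \eqref{eq-CONDITION-E} yields $1/q + 1/r = 1$, the natural candidate is
\[
g^\sharp := \N^{1-q}\,\bigl(\EE[f^\sharp]\bigr)^{q-1}.
\]

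Next, I would verify the normalization $\|g^\sharp\|_{L^r(\R^n)} = 1$ using $r(q-1) = q$, so
\[
\|g^\sharp\|_{L^r(\R^n)}^r = \N^{r(1-q)}\int_{\R^n}\bigl(\EE[f^\sharp]\bigr)^{r(q-1)}dy = \N^{-q}\,\N^q = 1.
\]
Then, by the Fubini/duality identity between $\EE$ and $\RR$,
\[
\iint_{\R^n\times\R^{n-k}}\frac{f^\sharp(x)g^\sharp(y)}{|x-y|^\lambda|y''|^\beta}dx\,dy = \int_{\R^n}g^\sharp(y)\,\EE[f^\sharp](y)\,dy = \N^{1-q}\int_{\R^n}\bigl(\EE[f^\sharp](y)\bigr)^q dy = \N,
\]
showing that $(f^\sharp,g^\sharp)$ attains the supremum in \eqref{eq-VariationalProb}.

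It remains to verify the symmetry statement for $g^\sharp|_{\R^{n-k}}$, interpreted as radial symmetry of the map $y' \mapsto g^\sharp(y',y'')$ for each fixed $y'' \in \R^k$. This reduces to the same assertion for $\EE[f^\sharp](y',y'')$. For fixed $y''$, the kernel
\[
K_{y''}(z) := \bigl(|z|^2 + |y''|^2\bigr)^{-\lambda/2}, \qquad z \in \R^{n-k},
\]
is radially symmetric and strictly decreasing in $|z|$. Since $f^\sharp$ is symmetric decreasing about $x_0$, the convolution
\[
\EE[f^\sharp](y',y'') = |y''|^{-\beta}\int_{\R^{n-k}} f^\sharp(x)\,K_{y''}(y'-x)\,dx
\]
is then symmetric decreasing in $y'$ about $x_0$ by the standard fact that the convolution of two symmetric decreasing functions on $\R^{n-k}$ is again symmetric decreasing. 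Raising to the power $q-1 > 0$ preserves this, giving the claimed monotone radial symmetry of $g^\sharp|_{\R^{n-k}}$.

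The proof is essentially routine once Theorem~\ref{thm-EXISTENCE} is in hand; there is no real obstacle beyond carefully tracking the exponents $1/q+1/r=1$ and $r(q-1)=q$ and invoking the convolution preservation of symmetric decreasing rearrangement to inherit the geometric structure of $f^\sharp$.
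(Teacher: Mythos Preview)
Your proposal is correct and follows essentially the same route as the paper: the paper also takes $g^\sharp$ to be a power of $\EE[f^\sharp]$ (see the equivalence argument in subsection~\ref{subsec-Equivalence}) and, in Step~4 of section~\ref{sec-ExistenceOptimal}, deduces the radial symmetry and monotonicity of $y'\mapsto \EE[f^\sharp](y',y'')$ from the convolution structure with the radially decreasing kernel $(|\cdot|^2+|y''|^2)^{-\lambda/2}$, invoking \cite[Lemma~2.2]{l1983}. Your normalization constant $\N^{1-q}$ is a cosmetic addition, and your appeal to the ``standard fact'' about convolutions of symmetric decreasing functions is exactly what the paper cites.
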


In a future work, we shall study a reverse HLS inequality on $\R^{n-k} \times \R^n$. The paper is organized as follows:
 

\tableofcontents

Before closing Introduction, let us introduce some notation and convention. For a positive integer $\ell$, we denote by $B^\ell _ R(x)$ the open ball in $\R^\ell$ centered at $x$ and radius $R$, namely
\[B^\ell _ R(x) = \{ \xi \in \R^\ell : |\xi - x| < R\}.\]
For simplicity, we often write $B^\ell _ R (0)$ as $B^\ell _ R$. 


\section{The weighted HLS inequality on $\R^{n-k} \times \R^n$}
\label{sec-RoughInequality}

In this section, we give a proof of Theorem \ref{thm-MAIN}, namely to prove \eqref{eq-MAIN}$_{k,\beta}$ without the sharp constant $\N$. 

As mentioned in Introduction, this is equivalent to showing that the supremum in \eqref{eq-VariationalProb} is finite. Toward this purpose, we first prove in subsection \ref{subsec-Equivalence} below that the two maximizing problems \eqref{eq-VariationalProb} and \eqref{eq-VariationalProb-E} are equivalent. Therefore, to prove the rough inequality \eqref{eq-MAIN}$_{k,\beta}$, it suffices to prove the rough inequality \eqref{eq-MAIN-E}, which will be done in subsection \ref{subsec-RoughInequality}. Finally, we spend subsection \ref{subsec-Necessity} to verify the necessity of the two conditions for $\lambda$ and $\beta$.


\subsection{The equivalence between (\ref{eq-VariationalProb}) and (\ref{eq-VariationalProb-E})}
\label{subsec-Equivalence}

We now prove that the two maximizing problems \eqref{eq-VariationalProb} and \eqref{eq-VariationalProb-E} are equivalent. Such a result seems to be foreseeable and standard. We provide a short proof for completeness. Denote by $N$ the supremun in \eqref{eq-VariationalProb-E}, namely
\[
N :=\sup_{f \geq 0} \big\{ \big\| \EE [f] \big\| _{L^q(\R^n)} : \| f\| _{L^p(\R^{n-k})} =1 \big\}
\] 
with $q=(1-1/r)^{-1}$. (Conventionally, we also write $(1-1/r)^{-1} = r'$.) In the first step of the proof, we show that $N = \N$. Indeed, let $(f^\sharp,g^\sharp)$ be an optimal pair for \eqref{eq-VariationalProb}. By definition, there holds 
$$\| f^\sharp\| _{L^p(\R^{n-k})} = \|g^\sharp\|_{L^r(\R^n)}=1.$$ 
As $1/q+1/r=1$, we use H\"older's inequality to get
\begin{align*}
\N 
&= \int_{\R^n} (\EE [f^\sharp])(y) g^\sharp (y) dy 
\leq \big\| \EE [f^\sharp] \big\| _{L^q(\R^n)}\| g^\sharp\| _{L^r(\R^n)} 
\leq N.
\end{align*}
Hence, we necessarily have $\N \leq N$. Now let $h^\sharp$ be an optimal function for \eqref{eq-VariationalProb-E}. Obviously, we must have $\|h^\sharp\|_{L^p(\R^{n-k})} =1$ and
\[
\| (\EE [h^\sharp])^{q-1} \| _{L^r(\R^n)}
=\Big(\int_{\R^n} (\EE [h^\sharp])^q (y) dy \Big)^{1/r} = N^{q-1},
\] 
thanks to $(q-1)r=q$. Then using \eqref{eq-MAIN}$_{k,\beta}$ applied to $(h^\sharp,(\EE [h^\sharp])^{q-1})$ we obtain
\begin{align*}
N^q &= \int_{\R^n} (\EE [h^\sharp])(y) (\EE [h^\sharp])^{q-1} (y) dy\\
&\leq \N \big\| h^\sharp \big\| _{L^p(\R^n)}\| (\EE [h^\sharp])^{q-1} \| _{L^r(\R^n)} \\
 & = \N N^{q-1}.
\end{align*}
Hence, we now get $N \leq \N$. Thus, we have just shown that $\N = N$ as claimed.

Now, we show that each optimal pair $(f^\sharp,g^\sharp)$ for \eqref{eq-VariationalProb} gives rise to an optimal function $h^\sharp$ for \eqref{eq-VariationalProb-E} and vice versa. By seeing the above calculation, this fact is quite clear. Obviously, if $(f^\sharp,g^\sharp)$ is an optimal pair for \eqref{eq-VariationalProb}, then the function $f^\sharp$ is also an optimal function for \eqref{eq-VariationalProb-E}. Conversely, if if $h^\sharp$ is an optimal function for \eqref{eq-VariationalProb-E}, then the pair $(f^\sharp, (\EE [f^\sharp])^{q-1})$ is an optimal pair for \eqref{eq-VariationalProb}.


\subsection{Proof of the weighted HLS inequality (\ref{eq-MAIN-E})}
\label{subsec-RoughInequality}

As mentioned in Introduction, to prove \eqref{eq-MAIN}$_{k,\beta}$, it suffices to prove \eqref{eq-MAIN-E}. As we shall soon see, in the present work, we present two different proofs for the rough inequity \eqref{eq-MAIN-E}. While the idea of the second proof stems from harmonic analysis and the theory of maximal functions, see Remark \ref{rmk-SecondProof} below, the idea of the first proof, which is presented in this section, demonstrates an intriguing connection between the weighted and unweighted versions of the HLS inequality; see Lemma \ref{lem-R2} below.

To begin, recall that $0< k <n$, $1<p, r<+\infty$, $\lambda >0$, and $\beta$ satisfies
\[
\beta < 
\left\{
\begin{aligned}
& k -\frac  kr   & & \text{if } \; 0 < \lambda \leq n-k,\\
& n - \lambda - \frac kr & & \text{if } \; n-k < \lambda  .\\
\end{aligned}
\right.
\] 
We do not consider the case $k=0$ since \eqref{eq-MAIN}$_{0,\beta}$ becomes the classical HLS inequality. The balance identity \eqref{eq-Identity} can be rewritten as follows
\begin{equation}\label{eq-R2}
\frac 1p + \frac 1r + \frac{\lambda - k/ q+\beta }{n-k} = 2,
\end{equation}
where $q$ is given by \eqref{eq-CONDITION-E}. Now we denote
\begin{equation}\label{eq-gamma}
\gamma = \lambda - k/ q+\beta.
\end{equation}
As $p,r>1$ we deduce from \eqref{eq-R2} that $\gamma > 0$. Now we estimate $\gamma$ from the above. As $\beta < k/q$ if $\lambda \leq n-k$, we easily obtain $\gamma < \lambda \leq n-k$ in this range. Now for $\lambda > n-k$, it follows from $\beta < n - \lambda - k/r$ that $\gamma <n-k/r-k/q=n-k$. Hence, we obtain the following important estimate
\[
0<\gamma < n-k.
\]
for all $\lambda >0$. For clarity, we split the proof into several steps. First we start with the following simple observation.

\begin{lemma}\label{lem-R1}
Let $h$ be a non-negative, non-decreasing function. Then we have
\[
\int_{0}^{+\infty}\Big( \frac{1}{\rho^{\gamma}}h(\rho)\Big) ^\tau \frac{d\rho}{\rho}
\leq 2^{(2\gamma +1) \tau }
\Big( \int_{0}^{+\infty} \frac{1}{\rho^{\gamma}}h(\rho) \frac{d\rho}\rho \Big)^\tau
\]
for any $\tau \geq 1$.
\end{lemma}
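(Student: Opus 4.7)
The plan is to derive a pointwise $L^\infty$ bound for $F(\rho) := h(\rho)/\rho^\gamma$ in terms of $I := \int_0^\infty F(\rho)\, d\rho/\rho$, and then interpolate via $F^\tau = F^{\tau-1} \cdot F$ to upgrade it to an $L^\tau(d\rho/\rho)$ bound. Although $F$ itself need not be monotone, the hypotheses of the lemma force it to oscillate by at most a multiplicative factor $2^\gamma$ on each dyadic interval $[\rho, 2\rho]$, and this is the only structural input needed.

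First I would argue the dyadic comparison: for any $\rho>0$ and any $\rho' \in [\rho, 2\rho]$, monotonicity of $h$ gives $h(\rho) \leq h(\rho')$, while $\gamma > 0$ (as established just before the lemma) gives $\rho^{-\gamma} \leq 2^\gamma (\rho')^{-\gamma}$. Multiplying yields $F(\rho) \leq 2^\gamma F(\rho')$. Integrating this pointwise inequality against the scale-invariant measure $d\rho'/\rho'$ over $[\rho, 2\rho]$ and then enlarging the range of integration to $(0,\infty)$ gives
\[
F(\rho) \ln 2 \;\leq\; 2^\gamma \int_\rho^{2\rho} F(\rho') \frac{d\rho'}{\rho'} \;\leq\; 2^\gamma I,
\]
so that $F(\rho) \leq (2^\gamma/\ln 2)\, I \leq 2^{2\gamma + 1} I$ for every $\rho > 0$, where I used $1/\ln 2 < 2$ and $\gamma \geq 0$.

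Next I would insert this pointwise bound into the factor $F^{\tau-1}$ of the identity $F^\tau = F^{\tau-1} \cdot F$, obtaining
\[
\int_0^\infty F^\tau \frac{d\rho}{\rho} \;\leq\; \bigl(2^{2\gamma+1} I\bigr)^{\tau-1} \int_0^\infty F \frac{d\rho}{\rho} \;=\; \bigl(2^{2\gamma+1}\bigr)^{\tau-1} I^\tau \;\leq\; 2^{(2\gamma+1)\tau} I^\tau,
\]
where the final inequality uses $\tau \geq 1$ and $2^{2\gamma+1} \geq 1$. This is exactly the stated bound.

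There is really no serious obstacle here: the only real observation is that the power weight $\rho^{-\gamma}$ is essentially constant on any dyadic scale, so that the logarithmic mean of $F$ controls $\|F\|_{L^\infty}$, after which the interpolation between $L^\infty$ and $L^1(d\rho/\rho)$ is immediate. The constant $2^{(2\gamma+1)\tau}$ in the statement is generous; the argument actually yields the slightly sharper factor $(2^\gamma/\ln 2)^{\tau-1}$, but the stated form is perfectly sufficient for the applications of this lemma later in the section.
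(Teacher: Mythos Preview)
Your proof is correct, but it takes a genuinely different route from the paper's argument. The paper proceeds by an explicit dyadic decomposition: it breaks the left-hand integral into pieces over $[2^j,2^{j+1}]$, bounds each piece by $(2^{-j\gamma}h(2^{j+1}))^\tau$ using monotonicity, shifts the index, applies the elementary embedding $\sum a_j^\tau \leq (\sum a_j)^\tau$ for $\tau\geq 1$, and finally bounds each dyadic value $2^{-j\gamma}h(2^j)$ back by the integral of $F$ over $[2^j,2^{j+1}]$. In contrast, you bypass the discretization entirely: you extract a pointwise $L^\infty$ bound $F(\rho)\leq (2^\gamma/\ln 2)\,I$ by averaging over a single dyadic interval $[\rho,2\rho]$, and then conclude via the trivial interpolation $\int F^\tau \leq \|F\|_\infty^{\tau-1}\int F$. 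Both arguments rest on the same observation---that the power weight $\rho^{-\gamma}$ varies by at most $2^\gamma$ on any dyadic scale, so monotonicity of $h$ makes $F$ comparable to its logarithmic average---but your packaging is shorter, stays in the continuum, and even yields the slightly better constant $(2^\gamma/\ln 2)^{\tau-1}$ versus the paper's $(4^\gamma/\ln 2)^\tau$.
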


\begin{proof}
To see the inequality, we first decompose the left hand side as follows
\begin{align*}
\int_{0}^{+\infty}\Big( \frac{1}{\rho^{\gamma}}h(\rho)\Big) ^\tau \frac{d\rho}{\rho} 
&=\sum_{j \in\mathbb Z} \int_{2^j}^{2^{j+1}} \Big( \frac{1}{\rho^{\gamma}} h(\rho)\Big) ^\tau\frac{d\rho}{\rho}\\
& \leq \sum_{j \in\mathbb Z} \int_{2^j}^{2^{j+1}} \Big( \frac{1}{2^{j\gamma}} h(2^{j+1})\Big) ^\tau \frac{d\rho}{2^j} 
 = \sum_{j \in\mathbb Z} \Big( \frac{1}{2^{j\gamma}} h(2^{j+1})\Big) ^\tau,
\end{align*}
thanks to the monotonicity of $h$. Hence, by changing the index of the sum, we arrive at
\begin{align*}
\int_{0}^{+\infty}\Big( \frac{1}{\rho^{\gamma}} h(\rho)\Big) ^\tau \frac{d\rho}{\rho} 
\leq 2^{\gamma \tau} \sum_{j \in\mathbb Z} \Big( \frac{1}{2^{j\gamma}} h(2^j)\Big) ^\tau
\leq 2^{\gamma \tau} \Big( \sum_{j \in\mathbb Z} \frac{1}{2^{j\gamma}} h(2^j)\Big) ^\tau,
\end{align*}
thanks to $\tau \geq 1$. Again by the monotonicity of $h$, we see that
\begin{align*}
\frac{1}{2^{j\gamma}} h(2^j) 
= \frac 1{\log 2} 
\frac{1}{2^{j\gamma}} \int_{2^j}^{2^{j+1}} h(2^j) \frac{d\rho}\rho
\leq \frac{2^\gamma}{\log 2} \int_{2^j}^{2^{j+1}} \frac 1{\rho^{\gamma}} h(\rho) \frac{d\rho}\rho.
\end{align*}
From this we obtain
\begin{align*}
\int_{0}^{+\infty}\Big( \frac{1}{\rho^{\gamma}} h(\rho)\Big) ^\tau\frac{d\rho}{\rho} 
\leq \Big(\frac {4^{\gamma }}{\log 2} \Big)^\tau 
\Big( \int_{0}^{+\infty} \frac 1{\rho^\gamma} h(\rho) \frac{d\rho}\rho \Big)^\tau,
\end{align*}
giving the first inequality. The proof is complete.
\end{proof}

Our next step is the key step to prove \eqref{eq-MAIN-E}. The idea is to transform a weighted inequality to a suitable an unweighted inequality.

\begin{lemma}\label{lem-R2}
Let $n > k \geq 1$, $p,r>1$, and $\lambda >0$. Suppose that $\beta$ satisfies \eqref{eq-Beta}. Then for any non-negative function $h$, we have
\begin{equation}\label{eq-R1}
\begin{split}
\int_{\R^{n}}\Big( \int_{\R^{n-k}} & \frac{h(x)}{|x-y|^\lambda |y''|^\beta}dx\Big)^qdy \\
&\leq \Big(\frac{\lambda 2^{2\gamma+1}}{\gamma} \frac 1{\lambda- \gamma}\Big)^q |\mathbb S^{k-1}| 
\int_{\R^{n-k}}
\Big(\int_{\R^{n-k}}\frac{h(x)}{|z-x|^{\gamma}}dx\Big)^qdz,
\end{split}
\end{equation}
where $\gamma $ is given in \eqref{eq-gamma} and $q$ is given in \eqref{eq-CONDITION-E}.
\end{lemma}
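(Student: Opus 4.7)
The plan is to reduce the left-hand side to a one-dimensional radial problem in $\rho=|y''|$ and then apply Lemma \ref{lem-R1} twice, so as to eliminate the weight $|y''|^{-\beta}$ at the cost of changing the kernel exponent from $\lambda$ to $\gamma$. Writing $y=(z,y'')\in\R^{n-k}\times\R^k$ and using polar coordinates $y''=\rho\omega$ with $\omega\in\mathbb{S}^{k-1}$, one rewrites
\[
\text{LHS}=|\mathbb{S}^{k-1}|\int_{\R^{n-k}}\int_0^\infty \rho^{k-1-\beta q}\,F(z,\rho)^q\,d\rho\,dz,\quad F(z,\rho):=\int_{\R^{n-k}}\frac{h(x)\,dx}{(|x-z|^2+\rho^2)^{\lambda/2}}.
\]
It then suffices, for each fixed $z$, to bound the inner integral by a multiple of $\bigl(\int h(x)|x-z|^{-\gamma}\,dx\bigr)^q$.

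The key splitting occurs at $|x-z|=\rho$: since $(|x-z|^2+\rho^2)^{\lambda/2}\geq\rho^\lambda$ on $\{|x-z|<\rho\}$ and $\geq|x-z|^\lambda$ otherwise, one has $F(z,\rho)\leq A(z,\rho)+B(z,\rho)$ with
\[
A(z,\rho):=\rho^{-\lambda}H(z,\rho),\quad H(z,\rho):=\int_{|x-z|<\rho}h(x)\,dx,\quad B(z,\rho):=\int_{|x-z|\geq\rho}\frac{h(x)\,dx}{|x-z|^\lambda}.
\]
Minkowski's inequality for the measure $\rho^{k-1-\beta q}\,d\rho$ separates the $A$ and $B$ contributions. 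Using the identity $k-1-\beta q-\lambda q=-q\gamma-1$, which is a rewriting of the balance condition \eqref{eq-R2}, the $A$-integral becomes $\int_0^\infty(\rho^{-\gamma}H(z,\rho))^q\,\rho^{-1}d\rho$; because $H(z,\cdot)$ is nondecreasing, Lemma \ref{lem-R1} with $\tau=q$ applies, and a Fubini computation yields $\int_0^\infty\rho^{-\gamma-1}H(z,\rho)\,d\rho=\gamma^{-1}\int h(x)|x-z|^{-\gamma}\,dx$. For the $B$-integral, $B(z,\cdot)$ is nonincreasing, so the substitution $s=1/\rho$ makes $\tilde B(s):=B(z,1/s)$ nondecreasing and recasts the integral as $\int_0^\infty(s^{-(\lambda-\gamma)}\tilde B(s))^q\,s^{-1}ds$; Lemma \ref{lem-R1} applies with exponent $\lambda-\gamma$, and a second Fubini gives $(\lambda-\gamma)^{-1}\int h(x)|x-z|^{-\gamma}\,dx$. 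Summing the two bounds, using $\gamma^{-1}+(\lambda-\gamma)^{-1}=\lambda\bigl(\gamma(\lambda-\gamma)\bigr)^{-1}$, and integrating in $z$ produces \eqref{eq-R1}.

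The main obstacle is to ensure that both exponents $\gamma$ and $\lambda-\gamma=k/q-\beta$ used in Lemma \ref{lem-R1} are strictly positive, since this is required for the lemma to apply and for the two Fubini computations to converge. Positivity of $\gamma$ is already recorded in the excerpt as a consequence of $p,r>1$ together with the balance condition. Positivity of $\lambda-\gamma$ is exactly the condition $\beta<k(1-1/r)=k/q$, and this is encoded in \eqref{eq-Beta}: it is stated directly in the range $0<\lambda\leq n-k$, while for $\lambda>n-k$ the stricter bound $\beta<n-\lambda-k/r$ still lies below $k/q$, since $(n-\lambda-k/r)-(k-k/r)=(n-k)-\lambda<0$. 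Thus \eqref{eq-Beta} is precisely what is needed to run both applications of Lemma \ref{lem-R1}, and no upper bound on $\lambda$ is required along the way.
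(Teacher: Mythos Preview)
Your argument is correct in substance and takes a genuinely different route from the paper's. The paper starts from the layer-cake identity $|x-y|^{-\lambda}=\lambda\int_{|x-y|}^\infty\rho^{-\lambda-1}\,d\rho$, obtains the one-sided bound
\[
\int_{\R^{n-k}}\frac{h(x)}{|x-y|^\lambda|y''|^\beta}\,dx\le\frac{\lambda}{|y''|^\beta}\int_{|y''|}^\infty\rho^{-\lambda-1}\Big(\int_{B^{n-k}_\rho(y')}h\Big)\frac{d\rho}{\rho},
\]
then applies H\"older in $\rho$ with a free parameter $\epsilon\in(0,k/q-\beta)$, integrates explicitly in $y''$, optimizes over $\epsilon$, and finally invokes Lemma~\ref{lem-R1} \emph{once}. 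Your near/far splitting at $|x-z|=\rho$ plus Minkowski avoids the Hölder-with-parameter step altogether and instead calls Lemma~\ref{lem-R1} \emph{twice}, once with exponent $\gamma$ on the increasing $H(z,\cdot)$ and once with exponent $\lambda-\gamma$ on the decreasing $B(z,\cdot)$ after the inversion $s=1/\rho$. Both proofs hinge on the same structural fact, namely $\lambda-\gamma=k/q-\beta>0$, which your last paragraph correctly extracts from \eqref{eq-Beta}. What your approach buys is a cleaner conceptual picture (no optimization, just two dyadic estimates); what the paper's approach buys is that the constant appears directly in the form stated.

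One minor inaccuracy: your final ``summing'' step does not actually reproduce the constant $\lambda\,2^{2\gamma+1}\big/\bigl(\gamma(\lambda-\gamma)\bigr)$ in \eqref{eq-R1}. The two applications of Lemma~\ref{lem-R1} carry different numerical factors, $2^{2\gamma+1}$ and $2^{2(\lambda-\gamma)+1}$, so after Minkowski you obtain
\[
\frac{2^{2\gamma+1}}{\gamma}+\frac{2^{2(\lambda-\gamma)+1}}{\lambda-\gamma},
\]
which only collapses via $\gamma^{-1}+(\lambda-\gamma)^{-1}=\lambda/(\gamma(\lambda-\gamma))$ after replacing both powers of $2$ by $2^{2\max(\gamma,\lambda-\gamma)+1}$. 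This is harmless for the purpose of the lemma---the constant is not sharp and is used only to feed into the classical HLS inequality on $\R^{n-k}$---but you should either state the constant you actually get or note that it agrees with \eqref{eq-R1} up to replacing $2^{2\gamma+1}$ by $2^{2\max(\gamma,\lambda-\gamma)+1}$.
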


\begin{proof}
To see this, first we notice that
\begin{equation}\label{eq-PowerPresentation}
\frac 1{|x-y|^\lambda }
= \lambda \int_{|x-y|}^{+\infty}\frac{1}{\rho^\lambda} \frac{d\rho}{\rho}
\end{equation}
with $\lambda > 0$, which, together with Fubini's theorem applied for the 90-degree cone 
$$\big\{ (x,t) : t > |x-y'| + |y''| \big\} \subset \R_+^{n-k+1},$$ 
implies
\begin{equation}\label{eq-IntegralPresentation}
\begin{aligned}
\int_{\R^{n-k}}\frac{h(x)}{|x-y|^\lambda |y''|^\beta}dx
& = \frac {\lambda}{|y''|^{\beta }} \int_{\R^{n-k}} \Big(\int_{|x-y|}^{+\infty}\frac{1}{\rho^\lambda}\frac{d\rho}{\rho} \Big) h(x) dx \\
& \leq \frac { \lambda }{|y''|^{\beta }} \int_{|y''|}^{+\infty}\frac{1}{\rho^\lambda} \Big( \int_{B^{n-k}_\rho(y')}h(x)dx \Big)\frac{d\rho}{\rho},
\end{aligned}
\end{equation}
as the above cone is contained in the ungula
\[
\big\{ (x,t) : t < |y''|, |x-y'| < t \big\} \subset \R_+^{n-k+1}.
\] 
We still need some work on \eqref{eq-IntegralPresentation}. For some $\epsilon>0$ to be determined later, we apply H\"older's inequality to get
\begin{equation}\label{eq-IntegralPresentation-AfterHolder}
\begin{aligned}
\int_{|y''|}^{+\infty}\frac{1}{\rho^\lambda} &\Big( \int_{B^{n-k}_\rho(y')}h(x)dx \Big) \frac{d\rho}{\rho} \\
&\leq \Big[ \int_{|y''|}^{+\infty}\Big( \frac{1}{\rho^{\lambda-\epsilon}}\int_{B^{n-k}_\rho(y')}h(x)dx\Big) ^q\frac{d\rho}{\rho} \Big]^{1/q}\Big[ \int_{|y''|}^{+\infty}\rho^{-\epsilon q'}\frac{d\rho}{\rho}\Big]^{1/q'}\\
& =\Big[ \frac 1{\epsilon q'} \Big]^{1/q'} \Big[ \int_{|y''|}^{+\infty}\Big( \frac{1}{\rho^{\lambda-\epsilon}}\int_{B^{n-k}_\rho(y')}h(x)dx\Big) ^q\frac{d\rho}{\rho} \Big]^{1/q}\Big[ \frac 1{|y''|^{ \epsilon q'}} \Big]^{1/q'}
\end{aligned}
\end{equation}
with $1/q + 1/q'=1$. Hence, combing \eqref{eq-IntegralPresentation} and \eqref{eq-IntegralPresentation-AfterHolder} and making use of Fubini's theorem applied for the 90-degree cone 
$$\big\{ (z,t) : t \geq |z| \big\} \subset \R_+^{k+1},$$ 
the left hand side of \eqref{eq-R1} can be estimated as follows
\begin{align*}
\int_{\R^{n}} & \Big(\int_{\R^{n-k}}   \frac{h(x)}{|x-y|^\lambda |y''|^\beta}dx\Big)^qdy \\
&\leq \lambda^q
\int_{\R^{n-k}}\int_{\R^{k}}\Big(\int_{|y''|}^{+\infty}\frac{1}{\rho^\lambda} \Big( \int_{B^{n-k}_\rho(y')}h(x)dx \Big) \frac{d\rho}{\rho}\Big)^q \frac {dy''}{|y''|^{\beta q}} dy' \\
&\leq  \lambda^q \Big[ \frac 1{\epsilon q'} \Big]^{q/q'} 
\int_{\R^{n-k}}\int_{\R^{k}} \Big[\int_{|y''|}^{+\infty}\Big( \frac{1}{\rho^{\lambda-\epsilon}}\int_{B^{n-k}_\rho(y')}h(x)dx\Big) ^q\frac{d\rho}{\rho} \Big] 
\frac{dy''}{|y''|^{\beta q+\epsilon q}} dy'\\
&= \lambda^q \Big[ \frac 1{\epsilon q'} \Big]^{q/q'} 
\int_{\R^{n-k}}
\int_{0}^{+\infty}\int_{ B^{k}_\rho(0)}\Big( \frac{1}{\rho^{\lambda-\epsilon}}\int_{B^{n-k}_\rho(y')}h(x)dx\Big) ^q \frac{dy''}{|y''|^{\beta q +\epsilon q}} \frac{d\rho}{\rho}dy' \\
& = \lambda^q \Big[ \frac 1{\epsilon q'} \Big]^{q/q'} 
 \int_{\R^{n-k}}
\int_{0}^{+\infty} \Big( \frac{1}{\rho^{\lambda-\epsilon}}\int_{B^{n-k}_\rho(y')}h(x)dx\Big) ^q \Big[ \int_{ B^{k}_\rho(0)} \frac{dy''}{|y''|^{\beta q +\epsilon q}} \Big] \frac{d\rho}{\rho}dy'.
\end{align*}
Notice that the condition $\beta < k(r-1)/r$ always holds, which yields $\beta q = \beta (1-1/r)^{-1} < k$. This together with $k \geq 1$ allows us to choose small $\epsilon > 0$ in such a way that $\beta q+\epsilon q<k$. From this we obtain
\[
\int_{ B^{k}_\rho(0)} \frac{dy''}{|y''|^{\beta q +\epsilon q}}
=\frac{|\mathbb S^{k-1}|}{k-\beta q - \epsilon q} \rho^{k-\beta q-\epsilon q},
\]
which allows us to further obtain
\begin{align*}
\int_{\R^{n}}\Big( &\int_{\R^{n-k}} \frac{h(x)}{|x-y|^\lambda |y''|^\beta}dx\Big) ^qdy\\
&\leq \frac{\lambda^q (\epsilon q')^{-q/q'} |\mathbb S^{k-1}|}{k-\beta q - \epsilon q}
\int_{\R^{n-k}}
\int_{0}^{+\infty}\Big( \frac{1}{\rho^{\lambda-\epsilon}}\int_{B^{n-k}_\rho(y')}h(x)dx\Big) ^q\rho^{k-\beta q-\epsilon q}\frac{d\rho}{\rho}dy' \\
&= \frac{\lambda^q (\epsilon q')^{-q/q'} |\mathbb S^{k-1}|}{k-\beta q - \epsilon q}
\int_{\R^{n-k}}
\int_{0}^{+\infty}\Big( \frac{1}{\rho^{\lambda-k/q+\beta}}\int_{B^{n-k}_\rho(y')}h(x)dx\Big) ^q\frac{d\rho}{\rho}dy'.
\end{align*}
Since $\epsilon>0$ is still arbitrary, we may choose one to obtain a rough constant. Observe that
\[
\min_{0 <\epsilon<k/q-\beta} \frac{ (\epsilon q')^{-q/q'} }{k-\beta q - \epsilon q} = \Big( \frac q{k-\beta q}\Big)^q
\]
at $\epsilon = (k-\beta q)/(q q')$. Hence, using this particular choice of $\epsilon$, we arrive at
\begin{equation}\label{eqKeyKey}
\begin{aligned}
\int_{\R^{n}}\Big( &\int_{\R^{n-k}} \frac{h(x)}{|x-y|^\lambda |y''|^\beta}dx\Big) ^qdy\\
\leq &\lambda^q \Big( \frac 1{k/q-\beta }\Big)^q |\mathbb S^{k-1}| 
\int_{\R^{n-k}}
\int_{0}^{+\infty}\Big( \frac{1}{\rho^{\lambda-k/q+\beta}}\int_{B^{n-k}_\rho(y')}h(x)dx\Big) ^q\frac{d\rho}{\rho}dy'.
\end{aligned}
\end{equation}
Since $h \geq 0$, the function
\[
\rho \mapsto \int_{B^{n-k}_\rho(y')}h(x)dx
\]
is non-decreasing. This together with $q\geq 1$ allows us to apply Lemma \ref{lem-R1} with $\tau$ replaced by $q$ and Fubini's theorem applied for the 90-degree cone 
\[
\big\{ (x,t) : t \geq |x-y'| \big\} \subset \R_+^{n-k+1}
\] 
to get
\begin{equation}\label{eqKeyKeyKey}
\begin{aligned}
\int_{0}^{+\infty}\Big( \frac{1}{\rho^{\lambda-k/q+\beta}} & \int_{B^{n-k}_\rho(y')}h(x)dx\Big) ^q\frac{d\rho}{\rho}\\
& \leq 2^{[2(\lambda-k/q+\beta)+1]q}
\Big( \int_{0}^{+\infty}\frac{1}{\rho^{\lambda-k/q+\beta}} \Big[ \int_{B^{n-k}_\rho(y')}h(x)dx \Big] \frac{d\rho}{\rho}\Big) ^q \\
&=2^{[2(\lambda-k/q+\beta)+1] q}
\Big(\int_{\R^{n-k} } \Big[\int_{|x-y'|}^{+\infty}\frac{1}{\rho^{\lambda-k/q+\beta}} \frac{d\rho}{\rho} \Big] h(x)dx \Big) ^q \\
&=\Big( \frac {2^{2(\lambda-k/q+\beta)+1}}{\lambda-k/q+\beta} \Big)^q \Big( \int_{\R^{n-k}}\frac{h(x)}{|x-y'|^{\lambda-k/q+\beta} }dx\Big) ^q,
\end{aligned}
\end{equation}
thanks to \eqref{eq-PowerPresentation} and $\gamma = \lambda-k/q+\beta > 0$. Putting the above estimates together, we arrive at
\begin{align*}
\int_{\R^{n}} \Big(&\int_{\R^{n-k}} \frac{h(x)}{|x-y|^\lambda |y''|^\beta}dx\Big)^qdy \\
&\leq \Big(\frac{\lambda 2^{2\gamma+1}}{\gamma} \frac 1{k/q-\beta}\Big)^q |\mathbb S^{k-1}| 
 \int_{\R^{n-k}}
\Big( \int_{\R^{n-k}}\frac{h(x)}{|x-y'|^{\gamma}}dx\Big) ^qdy'.
\end{align*} 
This is exactly the inequality \eqref{eq-R1}, and the proof of the lemma is complete.
\end{proof}

Having all the preparations above, to conclude \eqref{eq-MAIN-E}, we simply apply Lemma \ref{lem-R2} and the classical HLS inequality \eqref{eq-HLS-whole} on $\R^{n-k} \times \R^{n-k}$, namely
\begin{align*}
\big\| \EE [f] \big\| _{L^q(\R^n)} 
 \lesssim \Big[ \int_{\R^{n-k}}
\Big(\int_{\R^{n-k}}\frac{f(x)}{ |z-x|^{\gamma}}dx\Big)^qdz \Big]^{1/q} \;
\overset{\eqref{eq-HLS-whole}} \lesssim \big\| f \big\| _{L^p(\R^{n-k})},
\end{align*}
thanks to \eqref{eq-R2} and the key estimate $0<\gamma<n-k$ for all $\lambda >0$. 

Before closing this part, we prove a reverse version of \eqref{eq-R1}, see \eqref{eq-R3} below, which has its own interest. We do not directly use this result in the proof of \eqref{eq-MAIN}$_{k, \beta}$, but we shall use it in section \ref{subsec-application-optimal} to consider the HLS inequality on $\R^{n-k} \times \R^n$ with the optimal range for $\lambda$, which is quite remarkable. 

The strategy of proving \eqref{eq-R3} is similar to that of \eqref{eq-R1}, however, instead of using Lemma \ref{lem-R1}, we use a technical result from \cite{PV}, see also \cite{BVNV}, which concerns the series of equivalent norms concerning Radon measures.

\begin{lemma}\label{lem-R3}
Let $n > k \geq 1$, $p,r>1$, and $\lambda >0$. Suppose that $\beta$ satisfies \eqref{eq-Beta}. Then for any non-negative function $h$ we have
\begin{equation}\label{eq-R3}
\begin{split}
\int_{\R^{n-k}}
\Big(\int_{\R^{n-k}}\frac{h(x)}{|z-x|^{\gamma}}dx\Big)^qdz
\lesssim \int_{\R^{n}}\Big( \int_{\R^{n-k}} & \frac{h(x)}{|x-y|^\lambda |y''|^\beta}dx\Big)^qdy,
\end{split}
\end{equation}
where $\gamma $ is given in \eqref{eq-gamma} and $q$ is given in \eqref{eq-CONDITION-E}.
\end{lemma}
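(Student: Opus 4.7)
The plan is to run the proof of Lemma \ref{lem-R2} in reverse: we derive pointwise \emph{lower} bounds in place of the upper bounds obtained in \eqref{eq-IntegralPresentation}--\eqref{eqKeyKeyKey}, and in the final step we substitute the discrete rearrangement inequality (Lemma \ref{lem-R1}) with an \emph{equivalent-norm} characterization for Riesz potentials from \cite{PV, BVNV}. All parameters ($\gamma$, $q$, the bound $0<\gamma<n-k$, and the identity $k-1-\beta q-\lambda q=-\gamma q-1$ which follows from \eqref{eq-gamma}) play exactly the same role as in the proof of Lemma \ref{lem-R2}.

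I would begin from the layer-cake identity \eqref{eq-PowerPresentation} to write
\[
\int_{\R^{n-k}} \frac{h(x)}{|x-y|^\lambda |y''|^\beta}\,dx
= \frac{\lambda}{|y''|^\beta} \int_0^\infty \rho^{-\lambda-1} \int_{\{x\,:\,|x-y|<\rho\}} h(x)\,dx\,d\rho,
\]
and reverse the geometric step used in \eqref{eq-IntegralPresentation}: for every $\rho>\sqrt 2\,|y''|$ the ball $B^{n-k}_{\rho/\sqrt 2}(y')$ is contained in $\{x\in\R^{n-k}:|x-y|<\rho\}$, since $|x-y'|<\rho/\sqrt 2$ combined with $|y''|<\rho/\sqrt 2$ yields $|x-y|^2<\rho^2$. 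After the change of variable $\rho=\sqrt 2\,\sigma$ this produces the reverse analogue of \eqref{eq-IntegralPresentation},
\[
\int_{\R^{n-k}} \frac{h(x)}{|x-y|^\lambda |y''|^\beta}\,dx
\geq \frac{C_\lambda}{|y''|^\beta}\int_{|y''|}^\infty \sigma^{-\lambda-1} H(\sigma,y')\,d\sigma,
\]
where $H(\sigma,y'):=\int_{B^{n-k}_\sigma(y')} h(x)\,dx$. Raising to the $q$-th power, integrating over $y=(y',y'')\in\R^{n-k}\times\R^k$, and using spherical coordinates in $\R^k$ as in \eqref{eqKeyKey} yields
\[
\int_{\R^n}\Big(\int_{\R^{n-k}}\frac{h(x)}{|x-y|^\lambda |y''|^\beta}\,dx\Big)^q dy
\gtrsim \int_{\R^{n-k}}\int_0^\infty t^{k-1-\beta q}\Big(\int_t^\infty \sigma^{-\lambda-1}H(\sigma,y')\,d\sigma\Big)^q dt\,dy'.
\]

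Using the monotonicity of $\sigma\mapsto H(\sigma,y')$ via $\int_t^\infty \sigma^{-\lambda-1}H(\sigma,y')\,d\sigma \geq H(t,y')\int_t^{2t}\sigma^{-\lambda-1}\,d\sigma \gtrsim t^{-\lambda}H(t,y')$, together with the identity $k-1-\beta q-\lambda q=-\gamma q-1$, the problem reduces to establishing
\[
\int_{\R^{n-k}}\int_0^\infty t^{-\gamma q-1}H(t,z)^q\,dt\,dz
\gtrsim \int_{\R^{n-k}}\Big(\int_{\R^{n-k}}\frac{h(x)}{|z-x|^\gamma}\,dx\Big)^q dz.
\]

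The main obstacle is precisely this last displayed inequality: a direct pointwise comparison between $\big(\int_0^\infty \sigma^{-\gamma-1}H(\sigma,z)\,d\sigma\big)^q$ and $\int_0^\infty t^{-\gamma q-1}H(t,z)^q\,dt$ cannot hold for $q>1$, since the elementary inequality $\sum_j a_j^q\gtrsim(\sum_j a_j)^q$ fails for non-negative sequences even when they are geometrically constrained. It is only after integration in $z\in\R^{n-k}$, and exploiting the specific structure that $H(\cdot,z)$ arises from the same function $h$ via ball integration, that the correct direction emerges; this is exactly the content of the Wolff/Riesz-potential equivalent-norm result from \cite{PV} (see also \cite{BVNV}) which we invoke to conclude \eqref{eq-R3}.
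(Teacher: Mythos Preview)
Your proof is correct and follows essentially the same route as the paper: both obtain a pointwise lower bound of the form $\int_{\R^{n-k}}\frac{h(x)}{|x-y|^\lambda|y''|^\beta}\,dx \gtrsim |y''|^{-\lambda-\beta}\int_{B^{n-k}_{|y''|}(y')}h$, integrate over $y$ in spherical coordinates to reach $\int_{\R^{n-k}}\int_0^\infty t^{-\gamma q-1}H(t,z)^q\,dt\,dz$, and then invoke the Wolff--Riesz equivalent-norm result from \cite{PV} to conclude. The only cosmetic difference is that the paper extracts the single-scale bound directly (using the exact cone representation and restricting to $\rho\geq 2|y''|$), whereas you first retain the tail integral $\int_{|y''|}^\infty\sigma^{-\lambda-1}H(\sigma,y')\,d\sigma$ and reduce it to the single-scale term afterward via monotonicity---an extra but harmless step.
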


\begin{proof}
Our starting point is the equality in \eqref{eq-IntegralPresentation} together with Fubini's theorem applied for the 90-degree cone 
\[
\big\{ (x,t) : t \geq |x-y'| + |y''| \big\} \subset  \R_+^{n-k+1},
\]
which helps us to write
\begin{align*}
\int_{\R^{n-k}}\frac{h(x)}{|x-y|^\lambda |y''|^\beta}dx
& = \frac { \lambda }{|y''|^{\beta }} \int_{|y''|}^{+\infty}\frac{1}{\rho^\lambda} \Big( \int_{B^{n-k}_{\rho - |y''|} (y')}h(x)dx \Big)\frac{d\rho}{\rho}\\
& \geq \frac { \lambda }{|y''|^{\beta }} \int_{2|y''|}^{+\infty}\frac{1}{\rho^\lambda} \Big( \int_{B^{n-k}_{\rho - |y''|} (y')}h(x)dx \Big)\frac{d\rho}{\rho}\\
& \geq \frac { \lambda }{|y''|^{\beta }} \Big( \int_{B^{n-k}_{ |y''|} (y')}h(x)dx \Big) \Big( \int_{2|y''|}^{+\infty}\frac{1}{\rho^\lambda } \frac{d\rho}{\rho} \Big)\\
&\gtrsim \frac { \lambda }{|y''|^{\beta +\lambda }} \Big( \int_{B^{n-k}_{ |y''|} (y')}h(x)dx \Big)  .
\end{align*}
In the above estimate, the non-decreasing property of the function
\[
\rho \mapsto \int_{B^{n-k}_\rho(y')}h(x)dx
\]
have used once. Hence, we arrive at
\begin{equation}\label{eq-Lower-1}
\begin{aligned}
\int_{\R^{n}} \Big(\int_{\R^{n-k}} & \frac{h(x)}{|x-y|^\lambda |y''|^\beta}dx\Big)^qdy \\
&\geq \lambda^q
\int_{\R^{n-k}}\int_{\R^{k}}\Big(  \int_{B^{n-k}_{ |y''|} (y')}h(x)dx \Big)^q \frac {dy''}{|y''|^{(\beta + \lambda) q}} dy' .
\end{aligned}
\end{equation}
Keep in mind that $0<\lambda - k/q + \beta < n-k$. Arguing as in \eqref{eqKeyKeyKey} and making use of \cite[Proposition 5.1]{PV}, we easily get
\begin{align*}
\int_{\R^{n-k}}\int_{\R^{k}}\Big( & \int_{B^{n-k}_{ |y''|} (y')}h(x)dx \Big)^q \frac {dy''}{|y''|^{(\beta + \lambda) q}} dy'\\
&=|\mathbb S^{k-1}| \int_{\R^{n-k}} \int_0^{+\infty} \Big(\frac 1{\rho^{\lambda - k/q + \beta}} \int_{B^{n-k}_{\rho} (y')}h(x)dx \Big)^q  \frac{d\rho}{\rho} dy'\\
& \gtrsim \int_{\R^{n-k}}  \Big(  \int_0^{+\infty} \frac 1{\rho^{\lambda - k/q + \beta}} \Big[ \int_{B^{n-k}_{\rho} (y')}h(x)dx \Big] \frac{d\rho}{\rho} \Big)^q dy'\\
& =  \Big(\frac 1{\lambda - k/q + \beta} \Big)^q \int_{\R^{n-k}} \Big(  \int_{\R^{n-k}} \frac{h(x)}{|x-y'|^{\lambda - k/q + \beta}}dx  \Big)^q dy'.
\end{align*}
Combining the previous two estimates gives \eqref{eq-R3} as claimed.
\end{proof}

An immediate consequence of Lemmas \ref{lem-R2} and \ref{lem-R3} is the following, which is quite similar to \cite[estimate (2.15)]{BVHNV}.

\begin{corollary}
Let $n > k \geq 1$, $p,r>1$, and $0<\lambda <n$. Suppose that $\beta$ satisfies \eqref{eq-Beta}. Then for any non-negative function $h$ we have
\begin{align*}
\int_{\R^{n-k}}
\Big(\int_{\R^{n-k}}\frac{h(x)}{|z-x|^{\gamma}}dx\Big)^qdz
\sim \int_{\R^{n}}\Big( \int_{\R^{n-k}} & \frac{h(x)}{|x-y|^\lambda |y''|^\beta}dx\Big)^qdy,
\end{align*}
where $\gamma $ is given in \eqref{eq-gamma} and $q$ is given in \eqref{eq-CONDITION-E}.
\end{corollary}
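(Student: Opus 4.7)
The plan is to treat this corollary as a direct combination of the two preceding lemmas. First I would check that the hypotheses line up: the conditions $n > k \geq 1$, $p, r > 1$, $\lambda > 0$ (which is implied by $0 < \lambda < n$), and $\beta$ satisfying \eqref{eq-Beta} are precisely those shared by Lemma \ref{lem-R2} and Lemma \ref{lem-R3}, so both lemmas apply verbatim to the same non-negative function $h$ with the same exponent $\gamma = \lambda - k/q + \beta$ and the same $q$ given by \eqref{eq-CONDITION-E}. In particular, the key intermediate bound $0 < \gamma < n-k$, which was established before Lemma \ref{lem-R1}, is common to both lemmas.

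Next I would invoke Lemma \ref{lem-R2} to obtain the $\lesssim$ direction, which controls the weighted integral on $\R^n$ by the unweighted double integral on $\R^{n-k} \times \R^{n-k}$ with the explicit constant recorded in \eqref{eq-R1}. Then Lemma \ref{lem-R3} supplies the reverse $\gtrsim$ bound in \eqref{eq-R3}. Chaining the two gives simultaneously $\lesssim$ and $\gtrsim$ for the pair of integrals in question, which is by definition the equivalence $\sim$ asserted in the statement. No further computation is required.

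Since the combination step is entirely mechanical, I would not expect any genuine obstacle. The only mildly delicate point worth flagging is the apparent strengthening of hypothesis from $\lambda > 0$ in the two lemmas to $0 < \lambda < n$ in the corollary; however, the upper bound $\lambda < n$ is never used in the chaining argument itself and merely fixes the range that is relevant for the intended application in section \ref{subsec-application-optimal}. I would close by remarking that the resulting two-sided estimate is a weighted mixed-space analogue of \cite[estimate (2.15)]{BVHNV}, as already indicated in the statement, and that it will in effect let us transfer integrability questions for the $(\lambda,\beta)$-weighted kernel on $\R^n$ to purely Riesz-potential questions on $\R^{n-k}$.
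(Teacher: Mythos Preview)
Your proposal is correct and matches the paper's approach exactly: the corollary is recorded there simply as an immediate consequence of Lemmas \ref{lem-R2} and \ref{lem-R3}, with no further proof given. One minor bookkeeping slip worth fixing: since the left-hand side of the displayed equivalence is the unweighted Riesz integral, Lemma \ref{lem-R2} actually supplies the $\gtrsim$ direction and Lemma \ref{lem-R3} the $\lesssim$ direction, but this is immaterial to the argument.
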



\subsection{Necessity of the condition (\ref{eq-Beta})}
\label{subsec-Necessity}

We spend this part to discuss the necessity of the condition
\[
\beta < 
\left\{
\begin{aligned}
& k -\frac  kr   & & \text{if } \; 0 < \lambda \leq n-k,\\
& n - \lambda - \frac kr & & \text{if } \; n-k < \lambda.\\
\end{aligned}
\right.
\]
The argument performed in this part essentially follows from \cite{Ngo}.

First we establish the necessity of the condition $\beta < k(r-1)/r$ regardless of the size of $\lambda$. In this case, we may take $f \equiv \chi_{B_1^{n-k}}$ the characteristic function of $B_1^{n-k}$. Then
\begin{align*}
\big\| \EE [ \chi_{B_1^{n-k}} ] \big\| _{L^q(\R^n)} ^q 
&\geq \int_{B_1^k} \frac 1{|y''|^{\beta q}} \Big[ \iint_{(B_1^{n-k} )^2} \frac {dx dy'}{(|x-y'|^2+|y''|^2)^{\lambda/2}} \Big]^q dy'' \\&\gtrsim \int_{B_1^k} \frac {dy''}{|y''|^{\beta q}} = +\infty,
\end{align*}
as $\beta q \geq k$. Here we also use $$|x-y'|^2+|y''|^2 \leq 2(|x|^2+|y'|^2)+|y''|^2 \leq 5$$ to bound the double integral of $|x-y|^{-\lambda}$ from below. 

Notice that the above argument does not cover the range $\lambda > n-k$ since $n-\lambda - k/r < k-k/r$ in this range of $\lambda$. Hence we need extra work to cover the case $ \lambda > n-k$.

Now we rule out the case $\beta \geq n-\lambda - k/r$ for $\lambda > n-k$. For some non-negative function $f \in L^p(\R^{n-1})$ to be determined later, from \eqref{eq-Lower-1} we write 
\begin{align*}
+\infty > \| f \|_{L^p(\R^{n-1}}^q &\gtrsim \int_{\R^n} \Big(  \int_{\R^{n-k}}  \frac{f(x) }{   |x-y|^\lambda |y''|^\beta } dx \Big)^q  dy \\
&=\int_{\R^{n-k}} \int_{\R^k} \Big( \frac 1{ |y''|^\beta} \int_{\R^{n-k}}  \frac{f(x) }{   |x-y|^\lambda   } dx  \Big)^q  dy'' dy' \\
& \gtrsim
\int_{\R^{n-k}}\int_{\R^{k}}\Big( \frac 1{|y''|^{\lambda +\beta  }} \int_{B^{n-k}_{ |y''|} (y')} f(x)dx \Big)^q  dy'' dy',
\end{align*}
with $q=r/(r-1)$. Hence, by using $\int_{\R^k} = |\mathbb S^{k-1}| \int_0^{+\infty}$, we further obtain
\begin{align*}
+\infty > \| f \|_{L^p(\R^{n-1}}^q &\gtrsim\int_{\R^{n-k}}\int_0^{+\infty}\Big(  \frac 1{\rho^{ \lambda + \beta }} \int_{B^{n-k}_{\rho} (y')} f(x)dx \Big)^q \rho^{k-1} d \rho  dy'\\
&\geq  \int_{B_4^{n-k} \setminus B_2^{n-k}} \int_0^1    \Big( \frac 1{\rho^{ \lambda + \beta - k/q}   } \int_{B_{\rho}^{n-k} (y')} f(x)  dx  \Big)^q \frac{d \rho}{\rho} dy' .
\end{align*}
For the last line in the above computation, thanks to $|y'| \geq 2$ and $0\leq \rho \leq 1$, we know that the ball
\[
B_1^{n-k} \subset B_{\rho}^{n-k} (y') \subset B_5^{n-k}
\]
and that $2 \leq |y| \leq \sqrt 5$. Hence, if we choose $f = \chi_{B_6^{n-k}}$, then we can bound
\[
\int_{B_{\rho}^{n-k} (y')} f(x) dx \gtrsim \rho^{n-k},
\]
which yields
\begin{align*}
\int_0^1 \Big( \frac 1{\rho^{\lambda +\beta - k/q}}   \int_{B_{\rho}^{n-k} (y')} f(x)  dx  \Big)^q \frac{d\rho}{\rho}
& \gtrsim 
\int_0^1 \frac{d\rho}{\rho^{(\lambda + \beta - k/q + k- n)q +1} }.
\end{align*}
As $\lambda - k/q +\beta - (n-k) \geq 0$ if $\beta \geq n- \lambda - k/r$, the integral on the right hand side of the preceding inequality always diverges if $\beta \geq n - \lambda - k/r$. This completes the proof of the necessity of $\beta < n-\lambda-k/r$ in the range $ \lambda > n-k$.

Finally, we notice that the necessity of the condition $\lambda >0$. This is trivial and we can take it for granted. Otherwise, the inequality \eqref{eq-MAIN}$_{k,\beta}$ will be in the opposite direction. 


\section{Existence of an optimal pair $(f^\sharp, g^\sharp)$ for the HLS inequality}
\label{sec-ExistenceOptimal}

In this section, we prove the existence of an optimal pair $(f^\sharp, g^\sharp)$ for the optimal HLS inequality \eqref{eq-MAIN}$_{k, \beta}$ in the full regime of the parameters. Again, we do not treat the case $k=0$. Recall that $n > k \geq 1$, $p,r>1$, $\lambda >0$, and $\beta$ satisfies \eqref{eq-Beta}. In particular, there holds 
$$0<\lambda - k/q + \beta < n-k$$ for all $\lambda >0$. This together with  \eqref{eq-R2} helps us to deduce that $$1/p + 1/r>1$$ for all $\lambda >0$. Hence, we obtain
\begin{equation}\label{eq-q>p}
q := \Big( 1 - \frac 1r \Big)^{-1} >p,
\end{equation} 
which is very important in the proof. 

In this section, we prove Theorem \ref{thm-EXISTENCE} and Proposition \ref{prop-EXISTENCE}, namely there exists an optimal function $f^\sharp$ to maximizing problem \eqref{eq-VariationalProb-E}; see subsection \ref{subsec-Equivalence}. This is equivalent to proving that there exists a radially symmetric, strictly decreasing function $f^\sharp$ such that
\[
\big\| \EE [f^\sharp] \big\| _{L^q(\R^n)} = \N, \quad \| f^\sharp \| _{L^p(\R^{n-k})} =1.
\]
This is done within the first three steps of the proof. Finally, to conclude Proposition \ref{prop-EXISTENCE} and as any optimal function $f^\sharp$ for \eqref{eq-VariationalProb-E} gives rise to an optimal pair $(f^\sharp, (\EE [f^\sharp])^{q-1})$ for \eqref{eq-VariationalProb}, we shall show that the function $\EE [f^\sharp]$ is radially symmetric and strictly decreasing, which is the last step in the proof.

Throughout this section, for a function $h$, we denote by $h^\star$ the symmetric decreasing rearrangement of $h$ with respect to the first $n-k$ coordinates; see \cite{LiebLoss} or \cite{Bur} for the definition. Now we prove the existence of a non-trivial maximizer $f^\sharp$ for the problem \eqref{eq-VariationalProb-E}. For the sake of clarity, we divide our proof into several steps.

\medskip
\noindent\textbf{Step 1}. \textit{Selecting a suitable minimizing sequence for \eqref{eq-VariationalProb-E}.} 

We start our proof by letting $( f_j )_j$ be a maximizing sequence in $L^p(\R^{n-k})$ for the problem \eqref{eq-VariationalProb-E} such that $f_j$ is non-negative. Keep in mind that
\begin{align*}
\| f_j\| _{L^p(\R^{n-k})} = \| (f_j)^\star\| _{L^p(\R^{n-k})}
\end{align*}
Now by using Riesz's rearrangement inequality, see \cite[chapter 3]{LiebLoss}, H\"older's inequality, and $1/q+1/r=1$, we know that
\begin{equation}\label{eq-RieszE}
\begin{aligned}
\big\| \EE [ f_j ] \big\| _{L^q(\R^n)} 
&=
\sup_{\| h\| _{L^r (\R^n)} =1 } \int_{\R^k} \frac 1{ |y''|^{\beta }}
 \Big[ \iint_{(\R^{n-k} )^2 }\frac{f_j(x) h(y', y'') dx dy'}{ \big( |x-y'|^2 + |y''|^2 \big)^{\lambda/2} } \Big] dy'' \\
&\leq \sup_{\| h \| _{L^r (\R^n)} = 1} \int_{\R^k} \frac 1{ |y''|^{\beta }} 
 \Big[ \iint_{(\R^{n-k} )^2 }\frac{ (f_j)^\star (x) h^\star (y', y'') dx dy'}{ \big( |x-y'|^2 + |y''|^2 \big)^{\lambda/2} } \Big] dy''\\
&\leq \sup_{\| h \| _{L^r (\R^n)} = 1} \big\| \EE [ (f_j)^\star ] \big\| _{L^q(\R^n)} \| h^\star \| _{L^r (\R^n)} \\
&= \big\| \EE [ (f_j)^\star ] \big\| _{L^q(\R^n)}.
\end{aligned}
\end{equation}
Notice that
\begin{align*}
 \int_{\R^n} |h|^r dy &= \int_{\R^k} \Big( \int_{\R^{n-k}} |h|^r(y',y'') dy' \Big) dy''\\
& =\int_{\R^k} \Big( \int_{\R^{n-k}} |h^\star|^r(y',y'') dy' \Big) dy'' = \int_{\R^n} |h^\star|^r dy.
\end{align*}
Hence, as $\| h\| _{L^r (\R^n)} =1 $ we deduce that $\| h^\star \| _{L^r (\R^n)} = 1$. Thus
\begin{align*}
\big\| \EE [ f_j ] \big\| _{L^q(\R^n)} \leq\big\| \EE [ (f_j)^\star ] \big\| _{L^q(\R^n)} .
\end{align*}
Putting the above two estimates between $f_j$ and $(f_j)^\star$ together, we may further assume that $f_j$ is radially symmetric with respect to the origin and non-increasing. By abusing notations, we shall write $f_j(x)$ by $f_j(|x|)$ or even by $f_j (r)$ where $r=|x|$. We can normalize $f_j$ in such a way that $\| f_j\| _{L^p(\R^{n-k})} =1$. From this and the monotonicity of $f_j$, we have
\[
\begin{split}
1 = & |\mathbb S^{n-k-1}| \int_0^\infty f_j(r)^p r^{n-k-1} dr \geq \frac{|\mathbb S^{n-k-1}|}{n-k} f_j(R)^p R^{n-k}
\end{split}
\]
for any $R > 0$. From this, we obtain the following estimate 
\begin{equation}\label{eq-fj<=}
0\leq f_j(r) \leq \Big(\frac{n-k}{|\mathbb S^{n-k-1}|}\Big)^{1/p} r^{-(n-k)/p}
\end{equation}
for any $r > 0$. 

\medskip
\noindent\textbf{Step 2}. \textit{Existence of a potential maximizer $f^\sharp$ for the problem \eqref{eq-VariationalProb-E}.} 

For each non-negative function $h$ on $\R^{n-k}$, we denote
\[
\| h \| _* = \sup_{x \in \R^{n-k} , \rho>0} \Big[ \rho^{-\frac{n-k}{p'}} \int_{ B_\rho^{n-k} (x) } h(z) dz \Big]
\]
with $ p' = p/(p-1)$. Suppose that $h \in L^p(\R^{n-k})$. By H\"older's inequality we have
\begin{align*}
\rho^{-\frac{n-k}{p'} }\int_{B_\rho^{n-k} (x)} h(z) dz \lesssim \| h \| _{L^p(\R^{n-k})}.
\end{align*}
for arbitrary $x \in \R^{n-k}$ and for any $\rho >0$. Hence, by definition we get
\[
\| h \| _* \lesssim \| h\| _{L^p(\R^{n-k})}. 
\]
To go further, we need an auxiliary result, an analogue of \cite[Lemma 2.4]{l1983} concerning the behavior of $\big\| \EE [ f ] \big\| _{L^q(\R^n)}$, whose proof is located in Appendix \ref{apd-FarAwayFromZero} 

\begin{lemma}\label{lem-FarAwayFromZero}
Suppose that $f\in L^p(\R^{n-k})$ is non-negative. Then there exists a constant $C_1 > 0$, independent of $f$ such that 
\begin{equation*}\label{eq:farawayzero}
 \int_{\R^n} (\EE [ f ])^q dy \leq C_1 \| f\| _*^{q - p } \int_{\R^{n-k}} f^p dx 
\end{equation*}
where $q = r/(r-1) > p> 1$.
\end{lemma}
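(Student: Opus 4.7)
The plan is to reduce the weighted inequality for $\EE[f]$ to an unweighted Riesz-potential inequality on $\R^{n-k}$, and then obtain the desired bound by a Hedberg-type pointwise interpolation that trades the local maximal function of $f$ against the global quantity $\|f\|_*$. Concretely, Lemma \ref{lem-R2} already tells us that
\[
\int_{\R^n}(\EE[f])^q\,dy \;\lesssim\; \int_{\R^{n-k}}\Big(\int_{\R^{n-k}}\frac{f(x)}{|z-x|^\gamma}\,dx\Big)^q dz,
\]
with $\gamma=\lambda-k/q+\beta$ satisfying $0<\gamma<n-k$. Hence I am reduced to showing, for the Riesz potential $I_\gamma f(z):=\int_{\R^{n-k}}f(x)|z-x|^{-\gamma}\,dx$, the bound
\[
\int_{\R^{n-k}} (I_\gamma f)^q\,dz \;\leq\; C\,\|f\|_*^{\,q-p}\int_{\R^{n-k}}f^p\,dx.
\]

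The key pointwise step will be the Hedberg estimate
\[
I_\gamma f(z) \;\leq\; C\,\bigl(Mf(z)\bigr)^{p/q}\,\|f\|_*^{\,1-p/q},
\]
where $Mf$ is the centered Hardy--Littlewood maximal function on $\R^{n-k}$. To prove it I split $I_\gamma f(z)$ at a radius $R>0$ and perform a dyadic decomposition on each piece. On the near piece $\{|z-x|\leq R\}$, I bound $\int_{B^{n-k}_{R2^{-j}}(z)}f\leq C(R2^{-j})^{n-k}Mf(z)$ and sum; convergence uses $n-k-\gamma>0$, giving $C\,R^{n-k-\gamma}Mf(z)$. On the far piece $\{|z-x|>R\}$, I bound $\int_{B^{n-k}_{R2^{j+1}}(z)}f\leq \|f\|_*(R2^{j+1})^{(n-k)/p'}$ and sum; convergence uses $(n-k)/p'-\gamma<0$, which follows from the balance identity \eqref{eq-R2} since $1/p-1/q=(n-k-\gamma)/(n-k)$ forces $\gamma>(n-k)/p'$. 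This gives $C\,\|f\|_*\,R^{(n-k)/p'-\gamma}$ for the far piece. Optimizing $R$ by equating the two contributions, i.e.\ $R=(\|f\|_*/Mf(z))^{p/(n-k)}$, and using $p(n-k-\gamma)/(n-k)=1-p/q$, yields the advertised inequality.

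Once the pointwise bound is in hand, I raise it to the $q$-th power and integrate:
\[
\int_{\R^{n-k}}(I_\gamma f)^q\,dz \;\leq\; C\,\|f\|_*^{\,q-p}\int_{\R^{n-k}}(Mf)^p\,dz,
\]
and then conclude via the Hardy--Littlewood maximal inequality $\|Mf\|_{L^p(\R^{n-k})}\leq C_p\|f\|_{L^p(\R^{n-k})}$, which is available because $p>1$. Chaining this with the reduction supplied by Lemma \ref{lem-R2} produces the constant $C_1$ and finishes the proof.

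The main obstacle is really bookkeeping: one must verify that $\gamma$ lies in the regime $(n-k)/p'<\gamma<n-k$ so that both halves of the dyadic decomposition converge, and then that the exponents coming out of the optimization in $R$ match $1-p/q$ and $p/q$ exactly. Both of these are clean consequences of the balance condition \eqref{eq-R2} together with the sharp control $0<\gamma<n-k$ established right before Lemma \ref{lem-R1}. Beyond that, the argument is a standard Hedberg splitting paired with the elementary fact $M:L^p\to L^p$ for $p>1$, so no further technical input is needed.
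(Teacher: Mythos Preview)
Your proof is correct and follows essentially the same Adams/Hedberg strategy as the paper: reduce to an unweighted problem on $\R^{n-k}$, split at a scale, bound the near part via the maximal function and the far part via $\|f\|_*$, optimize the splitting radius, and finish with the $L^p$-boundedness of $M$ for $p>1$. The only cosmetic difference is that the paper performs the splitting directly in the $\int_0^\infty[\cdots]^q\,d\rho/\rho$ form coming from \eqref{eqKeyKey}, whereas you first pass all the way to the Riesz potential via the full Lemma~\ref{lem-R2} and then run the classical Hedberg pointwise estimate; the exponent bookkeeping and the choice of the optimal scale are identical in both versions.
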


Going back to the maximizing sequence $( f_j )_j$ in $L^p(\R^{n-k})$ for the problem \eqref{eq-VariationalProb-E}, for each $j$ we set
\[
a_j = \sup_{r>0} \big[ r^\frac{n-k}p f_j (r) \big].
\]
In view of \eqref{eq-fj<=} we know that
\[
0 < a_j \leq \Big(\frac{n-k}{|\mathbb S^{n-k-1}|}\Big)^{1/p}
\]
for all $j$. Using the monotonicity of $f_j$, we deduce that
\begin{align*}
 \int_{ B_\rho^{n-k} (x) } f_j (z) dz 
& \leq a_j \int_{ B_\rho^{n-k} (x) } |z|^{-\frac{n-k}p} dz \\
& \leq a_j \int_{ B_\rho^{n-k} (0) } |z|^{-\frac{n-k}p} dz \\
&= \frac{a_j}{(n-k)(1-1/p)}\rho^{ (n-k) (1 - \frac{1}{p})}
\end{align*}
for arbitrary $x \in \R^{n-k}$ and for any $\rho >0$. Consequently, there holds
\[
\| f_j \| _* \leq \frac{a_j}{(n-k)(1-1/p)}
\]
for all $j$. Recall from the choice of $f_j$ the following
\[
\| f_j\| _{L^p(\R^{n-k})} = 1, \quad 
\big\| \EE [ f_j ] \big\| _{L^q(\R^n)} \to \N .
\] 
Making use of Lemma \ref{lem-FarAwayFromZero} above, we obtain the following estimate
\begin{align*}
( \N)^q 
& \leq \int_{\R^n} (\EE [ f_j ])^q dy + o(1)_{j \nearrow +\infty} \\
& \leq C_1 \| f_j\| _*^{q - p } \int_{\R^{n-k}} f_j^p dx + o(1)_{j \nearrow +\infty} \\
& = C_1 \| f_j\| _*^{q - p } + o(1)_{j \nearrow +\infty} .
\end{align*}
Keep in mind that $q>p$. Hence, $\| f_j\| _*$ is bounded from below away from zero. This together with $\| f_j\| _* \lesssim a_j$ allows us to assume that $a_j \geq 2c_0$ for some $c_0 >0$ and for all $j$. Consequently, for each $j$, we can choose $\lambda_j > 0$ in such a way that
\[
\lambda_j^\frac{n-k}{p} f_j(\lambda_j) > c_0.
\] 
Then we set 
\[
g_j(x) = \lambda_j^\frac{n-k} p f_j(\lambda_j x).
\] 
From this, it is routine to check that $( g_j )_j$ is also a minimizing sequence for problem \eqref{eq-VariationalProb-E}, however, $g_j(1) > c_0$ for any $j$ by our choice for $\lambda_j$. Consequently, by replacing the sequence $( f_j )_j$ by the new sequence $( g_j )_j$, if necessary, we can further assume that our sequence $( f_j )_j$ obeys
\[
f_j(1) > c_0 \quad \text{ for any }\; j.
\] 

Similar to Lieb's argument in \cite{l1983}, which is based on Helly's theorem, by passing to a subsequence, we have
\[
f_j\to f^\sharp \quad \text{a.e. in } \; \R^{n-k}.
\] 
It is now evident that $f^\sharp$ is non-negative, radially symmetric, non-increasing, and is in $L^p(\R^{n-k})$. Of course, there holds $f^\sharp \not \equiv 0$. The rest of our arguments is to show that $f^\sharp$ is indeed the desired minimizer for \eqref{eq-VariationalProb-E}.

\medskip
\noindent\textbf{Step 3}. \textit{The function $f^\sharp$ is a maximizer for \eqref{eq-VariationalProb-E}.} 

Recall that $( f_j )_j$ is a minimizing sequence for the problem \eqref{eq-VariationalProb-E} and $f_j \to f^\sharp$ a.e. in $\R^{n-k}$. The limit function $f^\sharp$ satisfies $\| f^\sharp\| _{L^p(\R^{n-k})}>0$ because $f_j(x) > c_0$ for any $j$ and all $|x| \leq 1$. To go further, we need the following auxiliary result.

\begin{lemma}\label{lem-PointwiseConvergence}
Suppose that $(f_j)_j$ is a sequence of non-negative functions satisfying 
\[
f_j (x) \leq C |x|^{-\frac{n-k}p}
\]
for all $x\in \R^{n-k}$ and for some $C>0$. Then, if $f_j\to f^\sharp$ a.e. in $\R^{n-k}$, then we have
\[
\EE [ f_j ](y) \to \EE [ f^\sharp ](y)
\]
for almost every $y \in \R^n$. 
\end{lemma}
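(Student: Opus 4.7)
The plan is to apply Lebesgue's dominated convergence theorem. Fix $y = (y', y'') \in \R^n$ with $y'' \neq 0$, which is the case for almost every $y$. For such $y$, the factor $|y''|^{-\beta}$ is a finite positive constant that can be pulled outside the integral, and the sequence of integrands $x \mapsto f_j(x)/|x-y|^\lambda$ converges pointwise a.e. in $\R^{n-k}$ to $f^\sharp(x)/|x-y|^\lambda$ by hypothesis. The uniform pointwise bound $f_j(x) \leq C|x|^{-(n-k)/p}$ supplies a natural majorant
\[
\Phi_y(x) := \frac{C\,|x|^{-(n-k)/p}}{|x-y|^\lambda},
\]
so the task reduces to showing $\Phi_y \in L^1(\R^{n-k})$ for almost every $y$.

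To verify this, I would split $\R^{n-k}$ into three regions and estimate each separately. On a neighborhood of $x = 0$, one has $\Phi_y(x) \lesssim |y|^{-\lambda} |x|^{-(n-k)/p}$, which is integrable because $p > 1$ forces $(n-k)/p < n-k$. Near $x = y'$, the assumption $y'' \neq 0$ yields $|x - y| \geq |y''| > 0$, so $\Phi_y$ is uniformly bounded there. On the region $\{|x| \geq 2|y|\}$ we have $|x - y| \geq |x|/2$, hence $\Phi_y(x)$ is comparable to $|x|^{-(n-k)/p - \lambda}$, and integrability on this tail requires the condition $\lambda > (n-k)/p'$.

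The main (essentially only) nontrivial step will be confirming that the tail condition $\lambda > (n-k)/p'$ follows from the standing hypotheses \eqref{eq-Beta} and \eqref{eq-Identity}. A direct manipulation of \eqref{eq-Identity} shows that $\lambda > (n-k)/p'$ is equivalent to $\beta < n(1-1/r) = n/q$. I would then verify this upper bound on $\beta$ from \eqref{eq-Beta} by cases. If $0 < \lambda \leq n-k$, then $\beta < k - k/r = k/q \leq n/q$. If $\lambda > n-k$, then $\beta < n - \lambda - k/r = n - \lambda - k + k/q$, and since $n - \lambda - k < 0$ in this range, this gives $\beta < k/q \leq n/q$. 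With $\Phi_y \in L^1(\R^{n-k})$ secured for a.e. $y$, dominated convergence delivers $\EE[f_j](y) \to \EE[f^\sharp](y)$ for almost every $y \in \R^n$, which is the desired conclusion.
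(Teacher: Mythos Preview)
Your proposal is correct and follows exactly the approach the paper indicates: dominated convergence with majorant $C|x|^{-(n-k)/p}|x-y|^{-\lambda}$, the key point being the tail condition $\lambda + (n-k)/p > n-k$, which the paper states (without verification) as following from the standing assumptions. Your derivation of this inequality via the equivalence $\lambda > (n-k)/p' \Longleftrightarrow \beta < n/q$ and the case split on \eqref{eq-Beta} fills in precisely the detail the paper leaves to the reader.
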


Lemma \ref{lem-PointwiseConvergence} above simply follows from the dominated convergence theorem. It is worth noting that in order to apply the dominated convergence theorem, we make use of the inequality
$$\lambda + (n-k)/p > n-k,$$ 
which always holds true under our assumption \eqref{eq-Identity}. Hence, we omit the details and its proof is left for interested readers. 

Using Lemma \ref{lem-PointwiseConvergence} above, we further know that $\EE [ f_j ] \to \EE [ f^\sharp ] $ for a.e. in $\R^n$. The rest of the proof is more or less standard; see \cite[Lemma 2.7]{l1983}. Applying the Brezis--Lieb lemma to get
\[
\int_{\R^{n-k}} \big| |f_j|^p - |f^\sharp|^p - |f_j - f^\sharp|^p \big| dx \to 0
\]
as $j \nearrow +\infty$. So, one one hand we have
\begin{equation}\label{eq-LR}
\| f_j - f^\sharp \| _{L^p(\R^{n-k})}^p = 1 - \| f^\sharp \| _{L^p(\R^{n-k})}^p + o(1)_{j \nearrow +\infty}.
\end{equation}
However, on the other hand, we can estimate
\begin{align*}
( \N)^q + o(1)_{j \nearrow +\infty } &=\big \| \EE [ f_j ] \big\| _{L^q(\R^n)} ^q \\
 &=\big\| \EE [ f^\sharp] \big\| _{L^q(\R^n)}^q + \big\| \EE [ f_j - f^\sharp ] \big\| _{L^q(\R^n)} ^q + o(1)_{j \nearrow +\infty }\\
 &\leq (\N) ^q  \big[ \| f^\sharp\| _{L^p(\R^n)}^q +  \| f_j - f^\sharp \| _{L^p(\R^n)}^q \big] + o(1)_{j \nearrow +\infty }.
\end{align*}
Thus, dividing both sides of the preceding computation by $( \N)^q$ gives
\begin{equation}\label{eq-RL}
1 \leq \| f^\sharp\| _{L^p(\R^{n-k})}^q + \| f_j - f^\sharp\| _{L^p(\R^{n-k})}^q + o(1)_{j \nearrow +\infty}.
\end{equation}
Combining \eqref{eq-LR} and \eqref{eq-RL} and sending $j \nearrow +\infty$, we arrive at
\[
1 \leq \| f^\sharp \| _{L^p(\R^{n-k})}^q + \big(1-\| f^\sharp\| _{L^p(\R^{n-k})}^p \big)^{q/p}.
\]
From the fact that $q>p$ seeing \eqref{eq-q>p} and that $\| f^\sharp \| _{L^p(\R^{n-k})}>0$, we deduce that
\[
\| f^\sharp\| _{L^p(\R^{n-k})}= 1.
\]
This shows that $f^\sharp$ is a minimizer for \eqref{eq-VariationalProb-E}; hence finishing the proof of Step 3.

\medskip
\noindent\textbf{Step 4}. \textit{The function $\EE [f^\sharp]$ has two symmetries in $y'$ and $y''$ and is strictly decreasing in $y'$} 

This step is for the proof of Proposition \ref{prop-EXISTENCE}. We show that $\EE [f^\sharp] $ of variable $y$ has two symmetries in $y'$ and $y''$. While the symmetry with respect to $y''$ is obvious from the definition of $\EE [f^\sharp$, the symmetry with respect to $y'$ is also clear since $\EE [f^\sharp] $ is essentially the convolution of two radially symmetric functions $f^\sharp$ and $(|\cdot|^2+|y''|^2)^{-\lambda/2}$; see \cite[Lemma 2.2(i)]{l1983}. Since the argument is simple and short, we provide a proof for completeness. Indeed, let $A \in O(n-k)$ be arbitrary. Then
\begin{align*}
(\EE [f^\sharp])(Ay', y'') &= \frac 1{|y''|^\beta} \int_{\R^{n-k}} \frac{f^\sharp (x) }{(| x- A y'|^2 + |y''|^2)^{\lambda/2}} dx\\
&= \frac 1{|y''|^\beta} \int_{\R^{n-k}} \frac{f^\sharp (x) }{(|A(A^t x-y')|^2 + |y''|^2)^{\lambda/2}} dx\\
&= \frac 1{|y''|^\beta} \int_{\R^{n-k}} \frac{f^\sharp (A^t x) }{(| A^t x-y' |^2 + |y''|^2)^{\lambda/2}} dx\\
&= \frac 1{|y''|^\beta} \int_{\R^{n-k}} \frac{f^\sharp ( x) }{(| x-y' |^2 + |y''|^2)^{\lambda/2}} |\det A| dx\\
&=(\EE [f^\sharp])(y', y'') ,
\end{align*}
where $A^t$ is the transpose of $A$. Finally, the monotonicity of $\EE [f^\sharp]$ in $y'$ follows from \cite[Lemma 2.2(ii)]{l1983}. 

Notice that the monotonicity of $\EE [f^\sharp]$ in $y'$ can also be derived from a general result of Anderson applied to the function $h = (|\cdot|^2+|y''|^2)^{-\lambda/2} f^\sharp (\cdot + y')$ for $y$ fixed; see \cite[Theorem 1]{Anderson}. This is because
\begin{align*}
(\EE [f^\sharp])(\tau y', y'') 
&\geq \int_{\R^{n-k}} \frac{f^\sharp ( x + (1-\tau)y') }{(| x- \tau y' |^2 + |y''|^2)^{\lambda/2}} dx\\
&=\int_{\R^{n-k}} h(x - \tau y')dx\\
& \geq \int_{\R^{n-k}} h(x -y')dx \\
&=(\EE [f^\sharp])( y', y'')
\end{align*}
for any $0 \leq \tau \leq 1$. Here the radial symmetry and monotonicity of $f^\sharp$ are crucial to get
\[
f^\sharp ( x) \geq f^\sharp ( x + (1-\tau)y').
\]
If $\beta \geq0$, then the monotonicity of $\EE [f^\sharp]$ in $y''$ is clear. But it is not clear if this still holds when $\beta < 0$.

Before closing this section, we have the following remark.

\begin{remark}\label{rmk-SecondProof}
As $\| f_*\| \lesssim \| f\| _{L^p(\R^{n-k})}$, Lemma \ref{lem-FarAwayFromZero} gives us another proof of the rough HLS inequality \eqref{eq-MAIN}$_{k,\beta}$.
\end{remark}


\section{Discussions}
\label{sec-Discussion}

This section is devoted to a number of discussion and application from simple to complex around our main inequality \eqref{eq-MAIN}$_{k, \beta}$. 


\subsection{The HLS inequality on $\R^{n-k} \times \R^n$ with optimal range $0<\lambda<n-k/r$}
\label{subsec-application-optimal}

We start this section with a quite surprise application of Theorem \ref{thm-MAIN}. To be more precise, with $\beta=0$, which is possible because $\lambda < n - k/r$, we obtain from Theorem \ref{thm-MAIN} the following optimal result.

\begin{theorem}[optimal HLS inequality on $\R^{n-k} \times \R^n$]\label{thm-MAIN-unweighted}
Let $n \geq 1$, $0\leq k <n$, $p, r>1$, and $\lambda \in (0,n-k/r)$ satisfying the balance condition
\begin{equation*}\label{eq-Identity-unweighted}
\frac{n-k}n \frac 1p + \frac 1r + \frac { \lambda} n = 2 -\frac kn.
\end{equation*}
Then there exists a sharp constant ${\mathsf N}_{n,\lambda,p}^{k,0}  > 0$ such that
\begin{equation}\label{eq-MAIN-unweighted}
\Big| \iint_{\R^n \times \R^{n-k}} \frac{f(x) g(y)}{ |x-y|^\lambda  } dx dy \Big| 
\leq {\mathsf N}_{n,\lambda,p}^{k,0}
 \| f \| _{L^p(\R^{n-k})} \| g\| _{L^r(\R^n)}
\end{equation}
for any functions $f\in L^p(\R^{n-k})$ and $g\in L^r(\R^n)$. 
\end{theorem}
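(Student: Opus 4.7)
The plan is to deduce Theorem \ref{thm-MAIN-unweighted} as an immediate corollary of Theorem \ref{thm-MAIN} by specializing to $\beta = 0$. With this choice, the integrand in \eqref{eq-MAIN-unweighted} is literally the integrand of \eqref{eq-MAIN}$_{k,\beta}$, and the balance condition \eqref{eq-Identity} reduces precisely to the unweighted balance condition stated here. So the entire task is to check that the value $\beta = 0$ lies in the admissible range given by \eqref{eq-Beta} exactly when $\lambda \in (0, n - k/r)$.

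I would dispose of this verification by splitting on the two branches of \eqref{eq-Beta}. In the regime $0 < \lambda \le n - k$, the condition becomes $0 < k - k/r = k(r-1)/r$, which is automatic whenever $k \ge 1$ and $r > 1$. In the complementary regime $n - k < \lambda$, the condition becomes $0 < n - \lambda - k/r$, which is precisely $\lambda < n - k/r$. Stitching the two intervals together, for each $k \ge 1$ the choice $\beta = 0$ is admissible exactly when $\lambda \in (0, n - k/r)$. The endpoint $k = 0$ is outside the weighted regime of Theorem \ref{thm-MAIN}, and in that case \eqref{eq-MAIN-unweighted} is literally the classical Hardy--Littlewood--Sobolev inequality \eqref{eq-HLS-whole}, which may be invoked directly; note that $n - k/r = n$ when $k = 0$, so the endpoint is still consistent with the stated range.

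Once these verifications are in place, a single appeal to Theorem \ref{thm-MAIN} with $\beta = 0$ yields both the existence of the sharp constant ${\mathsf N}_{n,\lambda,p}^{k,0}$ and the inequality \eqref{eq-MAIN-unweighted}. There is essentially no analytic obstacle: the content is purely bookkeeping on \eqref{eq-Beta}. The only conceptual point worth emphasising is that the apparent asymmetry between the two branches in \eqref{eq-Beta} is exactly what produces the sharp upper endpoint $n - k/r$, which in the special case $k = 1$ strictly extends the classical range $0 < \lambda < n - 1$ used in all earlier work on the upper half space; this observation is the reason Theorem \ref{thm-MAIN-unweighted} is stated separately rather than being merely remarked upon.
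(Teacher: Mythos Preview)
Your proposal is correct and matches the paper's own argument essentially verbatim: the paper derives Theorem \ref{thm-MAIN-unweighted} by specializing Theorem \ref{thm-MAIN} to $\beta = 0$, noting only that this choice is admissible in \eqref{eq-Beta} precisely because $\lambda < n - k/r$. Your explicit case-split on the two branches of \eqref{eq-Beta} and your separate handling of $k=0$ via the classical inequality \eqref{eq-HLS-whole} simply spell out what the paper leaves implicit.
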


For arbitrary $r>1$, Theorem \ref{thm-MAIN-unweighted} is optimal in the sense that it does not hold if $\lambda \geq n-k/r$ by seeing \eqref{eq-Beta}. However, if we fix $0<\lambda<n$, then resolving the inequality $\lambda < n-k/r$ gives
\[
r > \max \Big\{ 1, \frac k{n-\lambda} \Big\}.
\]  
This condition tells us that the closer to $n$ the parameter $\lambda$ is, the bigger $r$ is. In the special case $k=1$, Theorem \ref{thm-MAIN-unweighted} helps us to revisit the HLS inequality \eqref{eq-HLS-upper} on the upper half space $\R_+^n$ with the \textit{optimal} range $$0<\lambda<n-1/r.$$ This improves the result of Dou and Zhu in \cite{dz2014}, which is stated for $0<\lambda<n-1$.
 

\subsection{An improvement of the HLS inequality on $\R_+^n$ with extended kernel (\ref{eq-HLSe-upper}) in the regime $0<\lambda < n-1$ and $0<\beta<1 - 1/r$}
\label{subsec-application-extended}

Our motivation of working on this problem also comes from the fact that the HLS inequality \eqref{eq-HLSe-upper} on $\R_+^n$ with extended kernel is ``weaker'' than the HLS inequality \eqref{eq-HLS-upper} on $\R_+^n$. Here by ``weaker'' we mean we can use \eqref{eq-HLS-upper} to derive \eqref{eq-HLSe-upper}. Indeed, as $\beta \leq 0$, we clearly have 
$$y_n^{-\beta} \leq |x-y|^{- \beta},$$ 
giving
\[
\iint_{\R_+^n \times \R^{n-1}} \frac{f(x)g(y)}{|x-y|^\lambda y_n^\beta} dx dy
\leq \iint_{\R_+^n \times \R^{n-1}} \frac{f(x)g(y)}{|x-y|^{\lambda +\beta}} dx dy,
\]
where, for simplicity, all the functions $f$ and $g$ are being non-negative. Notice that the balance condition \eqref{eq-BCe-upper} allows us to apply \eqref{eq-HLS-upper} with $\lambda$ replaced by $\lambda + \beta$. To be more precise, fixing any $f \in L^p(\partial\R_+^n)$ and any $g \in L^r(\R_+^n)$, we have
\begin{align*}
\iint_{\R_+^n \times \R^{n-1}} \frac{f(x)g(y)}{|x-y|^\lambda y_n^\beta} dx dy
&\leq \iint_{\R_+^n \times \R^{n-1}} \frac{f(x)g(y)}{|x-y|^{\lambda +\beta}} dx dy \\
& \overset{\eqref{eq-HLS-upper}}  \lesssim \| f\| _{L^p(\partial\R_+^n)}\| g\| _{L^r(\R_+^n)},
\end{align*}
provided \eqref{eq-BCe-upper} and $\beta \leq 0$ hold. This explains why \eqref{eq-HLSe-upper} is weaker than \eqref{eq-HLS-upper}. 

From the above discussion, it is natural to ask if \eqref{eq-HLSe-upper} still holds for suitable $\beta >0$. If there is such an inequality, then it implies that we will have a ``stronger'' version of the HLS inequality \eqref{eq-HLS-upper} on $\R_+^n$ in the following sense
\begin{align*}
\iint_{\R_+^n \times \R^{n-1}} \frac{f(x)g(y)}{|x-y|^{\lambda}} dx dy
& \leq \iint_{\R_+^n \times \R^{n-1}} \frac{f(x)g(y)}{|x-y|^{\lambda - \beta} y_n^\beta} dx dy \\
&\overset{?}{\lesssim} \| f\| _{L^p(\partial\R_+^n)}\| g\| _{L^r(\R_+^n)},
\end{align*}
so long as \eqref{eq-HLS-upper} holds. 

To be able to compare, we limit ourselves to the case $0<\lambda<n-1$, hence $\beta < 1-1/r$. Clearly, under the above setting, our inequality \eqref{eq-MAIN}$_{1, \beta}$ becomes
\begin{align*}
\Big| \iint_{\R^n \times \R^{n-1}} \frac{f(x) g(y)}{ |x-y|^\lambda |y_n|^\beta } dx dy \Big| 
\leq \mathsf N_{n, \lambda, p}^{1, \beta} \| f \| _{L^p(\R^{n-1})} \| g\| _{L^r(\R^n)}
\end{align*}
with $0<\lambda<n-1$ and $\beta < 1-1/r$; hence providing us an improvement of \eqref{eq-HLS-upper} for $\beta$, if we let $g |_{\R_-^n} \equiv 0$. 

In subsection \ref{subsec-application-k->1} below, we show that there is another way, which is quite intriguing, to obtain \eqref{eq-HLSe-upper} directly from \eqref{eq-MAIN}$_{k, \beta}$ without assuming $k=1$. Moreover, it is quite interesting to note that from the argument leading to \eqref{eq-MAIN}$_{k, \beta}$, we can relate the sharp constant $\N$ and that of \eqref{eq-HLS-upper}.


\subsection{From $\R^{n-k} \times \R^n$ to $\R^{n-k} \times \R_+^{n-k+1}$}
\label{subsec-application-k->1}

An other idea to improve \eqref{eq-HLSe-upper} for possible $\beta>0$ is to transform \eqref{eq-MAIN}$_{k, \beta}$ into \eqref{eq-HLSe-upper}. To fix the idea and for simplicity, we still limit ourselves to the case $0<\lambda<n-k$, hence $0<\beta < k(1-1/r)$.

To transform \eqref{eq-MAIN}$_{k, \beta}$ into \eqref{eq-HLSe-upper}, we simply make use of \eqref{eq-MAIN}$_{k, \beta}$ for function $g$ being radially symmetric in the last $k$ coordinates, namely
\[
g (y',y'') = g (y', |y''|).
\]
Observe that
\[
\int_{\R^n} \big(g (y)\big)^r dy=|\mathbb S^{k-1}| \int_{\R^{n-k}} \int_0^{+\infty} \big(g (y',\rho)\big)^r \rho^{k-1}d\rho dy' .
\]
Hence, by setting
\[
G (y', |y''|) =|\mathbb S^{k-1}|^{1/r} g (y', y'') |y''|^{(k-1)/r},
\]
on one hand, it is easy to verify that
\[
\int_{\R^n} g^r (y) dy=\int_{\R_+^{n-k+1}} G^r (z, \rho) dz d\rho.
\]
On the other hand, by using $\int_{\R^k} = |\mathbb S^{k-1}| \int_0^{+\infty}$, we get
\begin{align*}
\F_{\lambda,k}^{0, \beta} (f, g ) &=
 |\mathbb S^{k-1}| \int_{\R^{n-k}} \int_{\R^{n-k}}\int_0^{+\infty} \frac{f (x) g (y', \rho)}{ \big[ |x-y'|^2+\rho^2 \big]^{\lambda/2} \rho^\beta} \rho^{k-1}d\rho dy' dx \\
&= |\mathbb S^{k-1}|^{1-1/r} \int_{\R^{n-k}} \int_{\R_+^{n-k+1}} \frac{f (x) G (y', \rho) } { \big[ |x-y'|^2+\rho^2 \big]^{\lambda/2} \rho^{\beta - (k-1)(1 -1/r)}} d\rho dy' dx .
\end{align*}
Hence, the inequality \eqref{eq-MAIN}$_{k,\beta}$ for $(f , g )$ becomes the following inequality for $(f, G)$
\begin{equation}\label{eq-MAIN-transformed}
\begin{aligned}
\int_{\R^{n-k}} \int_{\R_+^{n-k+1}} \frac{f (x) G (y', \rho) }{ \big[ |x-y'|^2+\rho^2 \big]^{\lambda/2} \rho^{\widehat\beta} } & d\rho dy' dx \\
\leq |\mathbb S^{k-1}|^{1/r -1 } \N & \| f \| _{L^p(\R^{n-k})} \| G \| _{L^r (\R_+^{n-k+1})} 
\end{aligned}
\end{equation}
with $$\widehat \beta = \beta - (k-1)\big(1 -1/r \big) < 1 - 1/r.$$ 
Obviously, if $(f^\sharp, g^\sharp)$ is an optimal pair for \eqref{eq-MAIN}$_{k,\beta}$, then $(f^\sharp, G^\sharp)$ is also an optimal pair for \eqref{eq-MAIN-transformed}. However, due to the above transformation, the regularity for $(f^\sharp, g^\sharp)$ and $(f^\sharp, G^\sharp)$ are quite different. This could shed some light on the problem of classification of all optimal pairs for the inequality on the upper half space.

Nevertheless, from the above derivation and the existence of an optimal pair $(f^\sharp, g^\sharp)$ for \eqref{eq-MAIN}, we obtain the following improvement of \eqref{eq-HLSe-upper}, which is optimal.

\begin{theorem}[]\label{thm-MAIN-upper}
Let $n \geq 2$, $\lambda \in (0,n-1)$, $\beta < (r-1)/r$, and $p, r>1$ satisfying the balance condition
\begin{equation}\label{eq-Identity-upper}
\frac{n-1}n \frac 1p + \frac 1r + \frac { \beta + \lambda} n = 2 -\frac 1n.
\end{equation}
Then there exists a sharp constant $\P > 0$ such that
\begin{subequations}\label{eq-MAIN-upper}
\begin{align}
\Big| \iint_{\R_+^n \times \R^{n-1}} \frac{f(x) g(y)}{ |x-y|^\lambda y_n^\beta } dx dy \Big| 
\leq \P \| f \| _{L^p(\R^{n-1})} \| g\| _{L^r(\R_+^n)}
\tag*{ \eqref{eq-MAIN-upper}$_{\beta}$}
\end{align}
\end{subequations}
for any functions $f\in L^p(\R^{n-1})$ and $g\in L^r( \R_+^n)$. 
\end{theorem}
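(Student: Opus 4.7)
The plan is to deduce Theorem \ref{thm-MAIN-upper} as an immediate corollary of Theorem \ref{thm-MAIN} specialized to $k = 1$, via an even-reflection argument across the hyperplane $\{y_n = 0\}$. First I would check that the parameters match: for $k = 1$ the constraint \eqref{eq-Beta} in the range $0 < \lambda \leq n - 1$ reduces to $\beta < 1 - 1/r = (r-1)/r$, and the balance identity \eqref{eq-Identity} becomes exactly the balance condition imposed in Theorem \ref{thm-MAIN-upper}. Hence, under those hypotheses, Theorem \ref{thm-MAIN} supplies a finite sharp constant $\mathsf N_{n,\lambda,p}^{1,\beta}$ for the inequality \eqref{eq-MAIN}$_{1,\beta}$ on the whole space $\R^n \times \R^{n-1}$.

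Next, given any non-negative $g \in L^r(\R_+^n)$, I would form its even extension $\tilde g \in L^r(\R^n)$ by $\tilde g(y', y_n) := g(y', |y_n|)$. Since $x \in \R^{n-1}$ is identified with $(x,0) \in \R^n$, the quantities $|x-y|^2 = |x-y'|^2 + y_n^2$ and $|y_n|^\beta$ are both invariant under the reflection $y_n \mapsto -y_n$. Splitting the $y_n$-integral into the halves $\{y_n > 0\}$ and $\{y_n < 0\}$ therefore yields
\[
\iint_{\R^n \times \R^{n-1}} \frac{f(x)\,\tilde g(y)}{|x-y|^\lambda |y_n|^\beta}\, dx\, dy \;=\; 2 \iint_{\R_+^n \times \R^{n-1}} \frac{f(x)\,g(y)}{|x-y|^\lambda y_n^\beta}\, dx\, dy,
\]
together with $\|\tilde g\|_{L^r(\R^n)} = 2^{1/r}\|g\|_{L^r(\R_+^n)}$. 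Applying \eqref{eq-MAIN}$_{1,\beta}$ to the pair $(f,\tilde g)$ and dividing by $2$ then gives
\[
\iint_{\R_+^n \times \R^{n-1}} \frac{f(x)\,g(y)}{|x-y|^\lambda y_n^\beta}\, dx\, dy \;\leq\; 2^{1/r - 1}\,\mathsf N_{n,\lambda,p}^{1,\beta}\,\|f\|_{L^p(\R^{n-1})}\|g\|_{L^r(\R_+^n)},
\]
so the supremum defining $\P$ is finite; positivity $\P > 0$ follows by testing on the indicator functions of small balls.

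Because the argument is a one-step reduction, I do not foresee a genuine obstacle; the only delicate point is to ensure that $x$ lies on the reflection hyperplane (so that the kernel is invariant) and that the even extension preserves the $L^r$-norm up to the factor $2^{1/r}$. As a bonus, to identify the sharp constant, I would invoke Proposition \ref{prop-EXISTENCE}: the optimal pair $(f^\sharp, g^\sharp)$ for \eqref{eq-MAIN}$_{1,\beta}$ satisfies $g^\sharp = (\EE[f^\sharp])^{q-1}$, and by the symmetry of $\EE[f^\sharp]$ in the last coordinate established in Step 4 of Section \ref{sec-ExistenceOptimal}, $g^\sharp$ is even in $y_n$. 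Running the reflection calculation in reverse with this pair then shows $\P = 2^{1/r-1}\,\mathsf N_{n,\lambda,p}^{1,\beta}$ and produces $(f^\sharp,\, g^\sharp|_{\R_+^n})$ as an optimal pair for \eqref{eq-MAIN-upper}$_\beta$.
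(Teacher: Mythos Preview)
Your proposal is correct and is essentially the paper's own argument specialized to $k=1$: the even extension $\tilde g(y',y_n)=g(y',|y_n|)$ is exactly the paper's restriction to functions radially symmetric in the last $k$ coordinates when $k=1$, and your computation of the factor $2^{1/r-1}$ together with the use of the symmetry from Step~4 to identify the sharp constant reproduces the $k=1$ case of Proposition~\ref{prop-COMPARISION}. The only difference is that the paper carries out the transformation for general $k$ (landing on $\R_+^{n-k+1}$) and then relabels the dimension, whereas you go straight to $k=1$; both routes are the same in content.
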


In the last part of this subsection, we show that the transformed inequality \eqref{eq-MAIN-transformed} reveals an connection between the two sharp constants $\P$ and $\N$. We turn this observation into a proposition as follows.

\begin{proposition}[]\label{prop-COMPARISION}
There holds
\[
\mathsf P_{n-k+1, \lambda, p}^{ \beta} = |\mathbb S^{k-1}|^{1/r -1 } \mathsf N_{n,\lambda,p}^{k, \beta+(k-1)(1-1/r)}.
\]
In particular, with $k=1$ we have
\[
\P = 2^{1/r -1 } \mathsf N_{n,\lambda,p}^{1, \beta},
\]
relating the sharp constant of the HLS inequalities on $\R^{n-1} \times \R_+^n$ and on $\R^{n-1} \times \R^n$.
\end{proposition}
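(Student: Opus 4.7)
The plan is to derive the identity as two matching inequalities between sharp constants, both coming from the change of variables performed in subsection~\ref{subsec-application-k->1}. Write $\widehat\beta = \beta - (k-1)(1-1/r)$, so that the claim reads $\mathsf P_{n-k+1,\lambda,p}^{\widehat\beta} = |\mathbb S^{k-1}|^{1/r-1}\,\mathsf N_{n,\lambda,p}^{k,\beta}$. The key auxiliary object is the norm-preserving bijection
\[
g(y',y'') = |\mathbb S^{k-1}|^{-1/r}\, G(y',|y''|)\, |y''|^{-(k-1)/r}
\]
between non-negative $g \in L^r(\R^n)$ that are radially symmetric in the last $k$ coordinates and non-negative $G \in L^r(\R_+^{n-k+1})$. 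Passing to polar coordinates on $\R^k$ gives $\|g\|_{L^r(\R^n)} = \|G\|_{L^r(\R_+^{n-k+1})}$, and the computation leading to \eqref{eq-MAIN-transformed} supplies the transformation identity
\[
\F_{\lambda,k}^{\beta}(f,g) = |\mathbb S^{k-1}|^{1-1/r} \int_{\R^{n-k}}\!\!\int_{\R_+^{n-k+1}} \frac{f(x)\, G(y',\rho)}{\bigl[|x-y'|^2+\rho^2\bigr]^{\lambda/2}\,\rho^{\widehat\beta}}\,d\rho\,dy'\,dx.
\]

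For the upper bound $\mathsf P_{n-k+1,\lambda,p}^{\widehat\beta} \leq |\mathbb S^{k-1}|^{1/r-1}\,\mathsf N_{n,\lambda,p}^{k,\beta}$, I would take an arbitrary non-negative pair $(f, G) \in L^p(\R^{n-k}) \times L^r(\R_+^{n-k+1})$, lift it to $(f, g)$ through the bijection above, apply \eqref{eq-MAIN}$_{k,\beta}$ with sharp constant $\mathsf N_{n,\lambda,p}^{k,\beta}$, and then invoke the transformation identity together with $\|g\|_{L^r(\R^n)} = \|G\|_{L^r(\R_+^{n-k+1})}$. Taking the supremum over $(f,G)$ gives this half of the claim.

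The reverse inequality $\mathsf N_{n,\lambda,p}^{k,\beta} \leq |\mathbb S^{k-1}|^{1-1/r}\,\mathsf P_{n-k+1,\lambda,p}^{\widehat\beta}$ is where the existence theory acts as the crucial input. Proposition~\ref{prop-EXISTENCE} produces an optimal pair $(f^\sharp, g^\sharp) = (f^\sharp, (\EE [f^\sharp])^{q-1})$ saturating $\mathsf N_{n,\lambda,p}^{k,\beta}$, and Step~4 of section~\ref{sec-ExistenceOptimal} establishes that $\EE [f^\sharp]$ is radially symmetric in $y''$, hence so is $g^\sharp$. This places $g^\sharp$ in the range of the bijection, so it converts into some $G^\sharp \in L^r(\R_+^{n-k+1})$. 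Applying \eqref{eq-MAIN-upper}$_{\widehat\beta}$ to $(f^\sharp, G^\sharp)$ and using the transformation identity yields
\[
\mathsf N_{n,\lambda,p}^{k,\beta}
= \frac{\F_{\lambda,k}^{\beta}(f^\sharp, g^\sharp)}{\|f^\sharp\|_{L^p(\R^{n-k})}\,\|g^\sharp\|_{L^r(\R^n)}}
= |\mathbb S^{k-1}|^{1-1/r}\,\frac{H(f^\sharp, G^\sharp)}{\|f^\sharp\|_{L^p(\R^{n-k})}\,\|G^\sharp\|_{L^r(\R_+^{n-k+1})}}
\leq |\mathbb S^{k-1}|^{1-1/r}\,\mathsf P_{n-k+1,\lambda,p}^{\widehat\beta},
\]
where $H$ denotes the bilinear form appearing on the left-hand side of \eqref{eq-MAIN-upper}$_{\widehat\beta}$. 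Combining both bounds gives the equality, and specializing to $k=1$, where $|\mathbb S^0|=2$ and $(k-1)(1-1/r)=0$, recovers $\mathsf P_{n,\lambda,p}^{\beta} = 2^{1/r-1}\,\mathsf N_{n,\lambda,p}^{1,\beta}$.

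The only non-routine ingredient is the radial symmetry of $g^\sharp$ in $y''$, without which only the inequality $\mathsf P \leq |\mathbb S^{k-1}|^{1/r-1}\,\mathsf N$ could be extracted from the transformation identity alone; Proposition~\ref{prop-EXISTENCE} and Step~4 of section~\ref{sec-ExistenceOptimal} supply exactly this fact, so no genuine obstacle remains.
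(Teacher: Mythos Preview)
Your argument is correct and follows essentially the same route as the paper: both directions come from the polar-coordinate bijection between radial-in-$y''$ functions $g$ on $\R^n$ and functions $G$ on $\R_+^{n-k+1}$, with the nontrivial direction requiring that the optimizer $g^\sharp$ actually lies in the radial class. You are in fact more explicit than the paper about this last point, correctly isolating the $y''$-symmetry of $g^\sharp = (\EE[f^\sharp])^{q-1}$ from Step~4 as the hinge of the argument; the paper instead packages this into the sentence preceding \eqref{eq-MAIN-transformed} and, for the reverse inequality, invokes an optimal pair for \eqref{eq-MAIN-upper}$_{\widehat\beta}$, which is unnecessary since (as you do) one can simply take the supremum over arbitrary $(f,G)$.
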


\begin{proof}
We apply \eqref{eq-MAIN-transformed} for $(f^\sharp, G^\sharp)$ being an optimal pair for \eqref{eq-MAIN-transformed} to get
\begin{align*}
\mathsf P_{n-k+1, \lambda, p}^{\widehat \beta} &\| f^\sharp \| _{L^p(\R^{n-k})} \| G^\sharp \| _{L^r (\R_+^{n-k+1})} \\
&\geq \int_{\R^{n-k}} \int_{\R_+^{n-k+1}} \frac{f^\sharp (x) G^\sharp (y', \rho) }{ \big[ |x-y'|^2+\rho^2 \big]^{\lambda/2} \rho^{\widehat\beta} } d\rho dy' dx \\
&= |\mathbb S^{k-1}|^{1/r -1 } \N \| f^\sharp \| _{L^p(\R^{n-k})} \| G^\sharp \| _{L^r (\R_+^{n-k+1})} .
\end{align*}
From this we obtain
\[
\mathsf P_{n-k+1, \lambda, p}^{\widehat \beta} \geq |\mathbb S^{k-1}|^{1/r -1 } \N.
\]
Now we use \eqref{eq-MAIN-upper}$_{\widehat \beta}$ with $(f^\sharp, g^\sharp)$ being an optimal pair for \eqref{eq-MAIN-upper}$_{\widehat \beta}$ to get
\begin{align*}
\mathsf P_{n-k+1, \lambda, p}^{\widehat \beta} & \| f^\sharp \| _{L^p(\R^{n-k})} \| g^\sharp \| _{L^r (\R_+^{n-k+1})}\\
&= \int_{\R^{n-k}} \int_{\R_+^{n-k+1}} \frac{f^\sharp (x) g^\sharp (y', \rho) }{ \big[ |x-y'|^2+\rho^2 \big]^{\lambda/2} \rho^{\widehat\beta} } d\rho dy' dx \\
&\leq |\mathbb S^{k-1}|^{1/r -1 } \N \| f^\sharp \| _{L^p(\R^{n-k})} \| g^\sharp \| _{L^r (\R_+^{n-k+1})} .
\end{align*}
Hence we get
\[
\mathsf P_{n-k+1, \lambda, p}^{\widehat \beta} \leq |\mathbb S^{k-1}|^{1/r -1 } \N.
\]
The proof follows by putting the above estimates together.
\end{proof}


\subsection{An Stein--Weiss type inequality on $\R^{n-k} \times \R^n$}
\label{subsec-application-wHLS->SW}

In the literature, another weighted version of the inequality \eqref{eq-HLS-whole}, or the doubly weighted HSL inequality, also known as the SW inequality, named after Stein and Weiss, was also obtained in \cite{st1958}. Roughly speaking, for $$0<\lambda<n$$ and for suitable $\alpha, \beta$ satisfying
\[
\alpha < n(p-1)/p, \quad \beta < n(r-1)/r, \quad \alpha + \beta \geq 0,
\] 
the following rough inequality holds
\begin{equation}\label{eq-SW-whole}
\Big |\iint_{\R^n \times \R^n} \frac{f(x)g(y)}{|x|^\alpha |x-y|^\lambda |y|^\beta} dx dy\Big| \lesssim \| f \| _{L^p(\R^n )} \| g\| _{L^r(\R^n)}
\end{equation}
for any $f\in L^p(\R^n)$ and any $g\in L^r(\R^n)$ together with 
\[
1/p + 1/r \geq 1
\]
and the new balance condition
\begin{equation*}\label{eq-SW-BC-whole}
1 / p + 1/ r + (\alpha+\beta+\lambda )/n =2.
\end{equation*}
Inequality \eqref{eq-SW-whole} was extended by Dou to the case of $\R_+^n$; see \cite{Dou2016}. To be more precise, under the condition 
\[
0<\lambda<n-1
\]
the following sharp inequality was proved
\begin{equation}\label{eq-SW-upper}
\Big |\iint_{\R_+^n \times \R^{n-1}} \frac{f(x)g(y)}{|x|^\alpha |x-y|^\lambda |y|^\beta} dx dy\Big| \lesssim \| f \| _{L^p(\R^{n-1} )} \| g\| _{L^r(\R_+^n)}
\end{equation}
for any $f\in L^p(\R^{n-1})$ and any $g\in L^r(\R_+^n)$ together with
\[
\alpha < (n-1)(p-1)/p, \quad \beta<n(r-1)/r, \quad \alpha + \beta \geq 0,
\]
and 
\[
1/p + 1/r \geq 1
\]
and
the new balance condition
\begin{equation*}\label{eq-SW-BC-upper}
(n-1)/(n p) + 1 / r +(\alpha+\beta+\lambda)/n =2 - 1/n.
\end{equation*}
In the context of $\R^{n-k} \times \R^n$, it is expected that the following inequality holds
\begin{equation}\label{eq-MAIN-SW-k}
\Big| \iint_{\R^n \times \R^{n-k}} \frac{f(x) g(y)}{ |x|^\alpha |x-y|^\lambda |y|^\beta } dx dy \Big| 
\leq \M \| f \| _{L^p(\R^{n-k})} \| g\| _{L^r(\R^n)}
\end{equation}
for any functions $f\in L^p(\R^{n-k})$ and $g\in L^r(\R^n)$ under the following balance condition
\begin{equation}\label{eq-Identity-SW-k}
\frac{n-k}n \frac 1p + \frac 1r + \frac { \alpha + \beta + \lambda} n = 2 -\frac kn.
\end{equation}
We do not prove \eqref{eq-MAIN-SW-k} and leave it for future research. Instead, our aim is to replace the weight $|y|^{-\beta}$ by the weight $|y''|^{-\beta}$. In light of the discussion in subsection \ref{subsec-application-extended}, this gives an improvement of \eqref{eq-MAIN-SW-k} in the regime $\beta \geq 0$. 

Interestingly, our method of proving Theorem \ref{thm-MAIN} is flexible enough to obtain a similar result. To be more precise, we easily obtain the following SW type inequality on $\R^{n-k} \times \R^n$ with $0<\lambda<n-k$.

\begin{theorem}[SW type inequality on $\R^{n-k} \times \R^n$]\label{thm-MAIN-SW}
Let $n \geq 1$, $0\leq k <n$, $p, r>1$, $0<\lambda <n-k$, $0 \leq \alpha < (n-k)(p-1)/p$, and $\beta < k(r-1)/r$ satisfying 
\[
1/p + 1/r \geq 1
\]
and the balance condition \eqref{eq-Identity-SW-k}. Then there exists a sharp constant $\M > 0$ such that
\begin{subequations}\label{eq-MAIN-SW}
\begin{align}
\Big| \iint_{\R^n \times \R^{n-k}} \frac{f(x) g(y)}{ |x|^\alpha |x-y|^\lambda |y''|^\beta } dx dy \Big| 
\leq \M \| f \| _{L^p(\R^{n-k})} \| g\| _{L^r(\R^n)}
\tag*{ \eqref{eq-MAIN-SW}$_{k,\alpha, \beta}$}
\end{align}
\end{subequations}
for any functions $f\in L^p(\R^{n-k})$ and $g\in L^r(\R^n)$.
\end{theorem}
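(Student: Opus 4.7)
The plan is to reduce Theorem \ref{thm-MAIN-SW} to the classical Stein--Weiss inequality \eqref{eq-SW-whole} on $\R^{n-k}$ by piggybacking on the machinery already developed for Theorem \ref{thm-MAIN}. The crucial observation is that the extra weight $|x|^{-\alpha}$ depends only on the $\R^{n-k}$-variable, so it will be carried through Lemma \ref{lem-R2} untouched; once the weight $|y''|^{-\beta}$ and the $\R^k$-factor of the $y$-integration have been absorbed into an effective kernel $|z-x|^{-\gamma}$ on $\R^{n-k} \times \R^{n-k}$, we are left with a pure Stein--Weiss problem in ambient dimension $n-k$ to which the classical result applies directly.

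First, setting $q = r/(r-1)$, I would use H\"older duality in the $y$-variable to recast \eqref{eq-MAIN-SW}$_{k,\alpha,\beta}$ as the extension-type bound
\begin{equation*}
\Big\|\int_{\R^{n-k}} \frac{f(x)\, |x|^{-\alpha}}{|x-y|^\lambda\, |y''|^\beta}\, dx\Big\|_{L^q(\R^n)} \lesssim \|f\|_{L^p(\R^{n-k})}.
\end{equation*}
Next, I would apply Lemma \ref{lem-R2} to the non-negative function $h(x) = f(x)|x|^{-\alpha}$; this is permitted because $\beta < k(r-1)/r = k/q$ and $0 < \lambda < n-k$ together give $0 < \gamma := \lambda - k/q + \beta < n-k$ (the positivity of $\gamma$ coming from the balance \eqref{eq-Identity-SW-k} combined with $p,r>1$). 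The task is thereby reduced to proving
\begin{equation*}
\Big\|\int_{\R^{n-k}} \frac{f(x)\,|x|^{-\alpha}}{|z-x|^\gamma}\, dx\Big\|_{L^q(\R^{n-k})} \lesssim \|f\|_{L^p(\R^{n-k})},
\end{equation*}
which is (dual to) the classical Stein--Weiss inequality \eqref{eq-SW-whole} in dimension $n-k$, with weight exponents $0$ and $\alpha$, kernel exponent $\gamma$, and Lebesgue exponents $(p,q')$.

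It then remains to verify the four hypotheses of \eqref{eq-SW-whole} in dimension $n-k$: (i) $0 \leq \alpha < (n-k)(p-1)/p$ is part of the hypothesis; (ii) $0 < \gamma < n-k$ is already checked; (iii) $1/p + 1/q' \geq 1$ rewrites as $1/p + 1/r \geq 1$, which is assumed; and (iv) the balance $1/p - 1/q + (\alpha+\gamma)/(n-k) = 1$ follows from a short manipulation of \eqref{eq-Identity-SW-k}: multiplying through by $n$, substituting $1/r = 1 - 1/q$ and $\gamma + k/q = \lambda + \beta$, and dividing by $n-k$ yields precisely this identity. Once the inequality is secured, the sharp constant $\M$ is simply the finite, positive supremum of the bilinear functional over the unit sphere product.

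I do not expect any genuine obstacle: the argument is a routine composition of Lemma \ref{lem-R2} with a classical inequality, and the placement of the weight $|x|^{-\alpha}$ on the $\R^{n-k}$ side is exactly what lets it commute with the reduction. The one point that requires a moment's accounting is matching the $n$-dimensional balance identity to the $(n-k)$-dimensional Stein--Weiss balance, but this is purely algebraic. If instead the weight had been placed on the full $\R^n$ side as $|y|^{-\alpha}$, the reduction would no longer be clean and a genuinely new argument would be needed, which is presumably why the authors note \eqref{eq-MAIN-SW-k} is left for future research.
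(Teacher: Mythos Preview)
Your proposal is correct and follows essentially the same route as the paper: apply Lemma~\ref{lem-R2} to $h(x)=|x|^{-\alpha}f(x)$ to collapse the $|y''|^{-\beta}$ weight and the $\R^k$-integration into the kernel $|z-x|^{-\gamma}$ on $\R^{n-k}$, then invoke the classical Stein--Weiss inequality \eqref{eq-SW-whole} in dimension $n-k$ with weights $(\alpha,0)$ and exponents $(p,r)$. The only minor slip is your justification of $\gamma>0$ (the balance together with $p,r>1$ gives only $\gamma+\alpha>0$; one also needs $\alpha<(n-k)(1-1/p)$, which then yields $\gamma>(n-k)(1-1/r)>0$), but this is harmless and the paper's own wording here is equally terse.
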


\begin{proof}
Notice that the balance condition \eqref{eq-Identity-SW-k} can be rewritten as follows
\[
\frac 1p + \frac 1r + \frac { \lambda - k/q  + \beta + \alpha} {n-k} = 2 .
\]
Now we can use Lemma \ref{lem-R2} with the function $h(x) = |x|^{-\alpha} f(x)$ and the classical SW inequality \eqref{eq-SW-whole} with $\beta = 0$ to get
\begin{align*}
\int_{\R^n} \Big( \int_{\R^{n-k}} \frac{f(x)}{ |x|^\alpha |x-y|^\lambda |y''|^\beta } dx \Big)^q dy 
& \overset{\eqref{eq-R1}} \lesssim \int_{\R^{n-k}} \Big( \int_{\R^{n-k}} \frac{f(x)}{ |x|^\alpha |x-y|^{\lambda - k/q  + \beta } } dx \Big)^q dy \\
& \overset{\eqref{eq-SW-whole}} \lesssim \| f \| _{L^p(\R^{n-k})}
\end{align*}
with $q=r/(r-1)$. Notice that in the regime $0<\lambda<n-k$, to be able to apply \eqref{eq-R1}, we require 
\[
\beta < k(r-1)/r, \quad 0<\lambda - k/q  + \beta < n-k.
\]
The latter inequality holds thanks to $\lambda > 0$, $\beta<k/q$, and $\alpha \geq 0$. Similarly, to be able to apply \eqref{eq-SW-whole} we need
\[
0\leq \alpha \leq (n-k)(p-1)/p, \quad 1/p+1/r \geq 1.
\] 
By duality, we obtain \eqref{eq-MAIN-SW}$_{k,\alpha, \beta}$ as expected. 
\end{proof}

The requirement $1/p + 1/r \geq 1$ is necessary for Theorem \ref{thm-MAIN-SW}. In the case $\alpha = 0$, it is automatically satisfied thanks to $\lambda - k/q + \beta \leq n-k$. Apparently, Theorem \ref{thm-MAIN-SW} can be improved for any $\lambda >0$, but we do not consider this case here and also leave it for future research.

Using the computation in subsection \ref{subsec-application-k->1} above, it is not hard to see that our SW type inequality \eqref{eq-MAIN-SW}$_{k,\alpha, \beta}$ also leads us to
\begin{equation}\label{eq-MAIN-SW-transformed}
\begin{aligned}
\iint_{\R^{n-k} \times \R_+^{n-k+1}} & \frac{f (x) G (y', \rho) d\rho dy' dx }{ |x|^\alpha \big[ |x-y'|^2+\rho^2 \big]^{\lambda/2} \rho^{\widehat\beta} } 
\lesssim \| f \| _{L^p(\R^{n-k})} \| G \| _{L^r (\R_+^{n-k+1})} ,
\end{aligned}
\end{equation}
which is an analogue of \eqref{eq-SW-upper}. Clearly, \eqref{eq-MAIN-SW-transformed} is also an improvement of \eqref{eq-SW-upper} if $\widehat\beta \geq 0$.



\subsection{Characterization of any optimal pair $(f^\sharp, g^\sharp)$}

We now turn our attention to an optimal pair $(f^\sharp, g^\sharp)$ for the maximizing problem \eqref{eq-VariationalProb} found by Proposition \ref{prop-EXISTENCE} above. To gain further properties on $(f^\sharp, g^\sharp)$, it is often to study the Euler--Lagrange equation for $\N $. Let us compute the Euler--Lagrange equation for $\F_{\lambda,k}$ together with the constraint 
\[
\|f \|_{L^p(\R^{n-k})} = \|g\|_{L^r(\R^n)}=1.
\] 
Clearly, with respect to the function $f$, the first variation of the functional $\F_{\lambda,k}$ is
\[
D_f (\F_{\lambda,k}) (f,g) (h) = \int_{\R^{n-k}} \Big( \int_{\R^n}\frac{ g(y) }{ |x-y|^\lambda |y''|^\beta }dy \Big) h(x) dx
\]
while the first variation of the constraint $\int_{\R^{n-k}} |f(x)|^p dx =1$ is
\[
p \int_{\R^{n-k}} |f(x)|^{p-2} f(x) h(x) dx.
\]
Therefore, by the Lagrange multiplier theorem, there exists some constant $\alpha$ such that
\[
\int_{\R^{n-k}} \Big( \int_{\R^n} \frac{g(y)}{ |x-y|^\lambda |y''|^\beta }dy \Big) h(x) dx = \alpha \int_{\R^{n-k}} |f(x)|^{p-2} f(x) h(x) dx
\]
holds for all function $h$ defined in $\R^{n-k}$. From this we know that $f$ and $g$ must satisfy the following equation
\[
\alpha |f(x)|^{p-2} f(x) 
= \int_{\R^n} \frac{ g(y)}{|x-y|^\lambda |y''|^\beta } dy .
\]
Interchanging the role of $f$ and $g$, we also know that $f$ and $g$ must fulfill the following
\[
\beta |g(y)|^{r-2} g(y) 
= \int_{\R^{n-k}} \frac{ f(x)}{|x-y|^\lambda |y''|^\beta } dx 
\]
for some new constant $\beta$. Using a suitable test function, we know that $\alpha = \beta = \F_{\lambda,k} (f,g)$. Hence, up to a constant multiple and simply using the following changes
\begin{equation*}\label{eq-(f,g)->(u,v)}
u =f^{p-1} \quad \text{and} \quad v = g^{r-1},
\end{equation*}
the two relations above lead us to the following system of integral equations
\begin{subequations}\label{eqIntegralSystem}
\begin{align}
\left\{
\begin{aligned}
u(x) &= \int_{\R^n} \frac{ v^\kappa (y) }{|x-y|^\lambda |y''|^\beta } dy,\\
v(y) &= \int_{\R^{n-k}} \frac{ u^\theta (x) }{|x-y|^\lambda |y''|^\beta } dx,
\end{aligned}
\right.
\tag*{\eqref{eqIntegralSystem}$_{k, \beta}$}.
\end{align}
\end{subequations}
with
\begin{equation*}\label{eq-(p,r)->(kappa,theta)}
\kappa = \frac 1{r-1} >0, \quad \theta = \frac 1{p-1} >0.
\end{equation*}
Using the balance condition \eqref{eq-Identity}, it is not hard to see that $\kappa$ and $\theta$ fulfill
\begin{equation}\label{eq-NecessaryCondition}
\frac{n - k}{n}\frac{1}{{\theta + 1}} + \frac{1}{\kappa + 1} = \frac{\lambda +\beta}{n}.
\end{equation}
In this sense, the condition \eqref{eq-NecessaryCondition} usually refers to the critical condition for \eqref{eqIntegralSystem}$_{k,\beta}$. From the above derivation, any optimal pair $(f^\sharp, g^\sharp)$ for the weighted HLS inequality \eqref{eq-MAIN}$_{k,\beta}$ must solve the system \eqref{eqIntegralSystem}$_{k,\beta}$. Hence, we can naively ask the following:
\begin{itemize}
 \item a regularity result for solutions to \eqref{eqIntegralSystem}$_{k,\beta}$ with $\lambda >0$ and
 \item a classification for solutions to \eqref{eqIntegralSystem}$_{k,\beta}$ with $\lambda >0$.
\end{itemize}  
As far as we know, there is no such a result for the above questions, even in the case $k=1$.


\section*{Acknowledgments}

This project initiated when the authors were visiting ICTP in 2017, whose support is greatly acknowledged. QAN is supported by the Tosio Kato Fellowship awarded in 2018. QHN is supported by the ShanghaiTech University startup fund. VHN would like to thank VIASM for support during the finalizing of the paper.


\appendix

\section{Proof of Lemma \ref{lem-FarAwayFromZero}}
\label{apd-FarAwayFromZero}

This appendix is devoted to the proof of Lemma \ref{lem-FarAwayFromZero}, namely we shall prove the following inequality
\begin{align*}
\int_{\R^n} (\EE [f]) ^q (y) dy \lesssim\| f\| _*^{q-p} \int_{\R^{n-k}} f^p (x)dx
\end{align*} 
for any $f \in L^p(\R^{n-k})$, which can be assumed to be non-negative. As mentioned earlier, Lemma \ref{lem-FarAwayFromZero} also provides us another way to prove \eqref{eq-MAIN}$_{k,\beta}$. To prove Lemma \ref{lem-FarAwayFromZero}, we mimic the proof of Adams for Riesz potentials; see \cite{Adams}. Recall from the definition of $\EE [f] $ the following
\begin{align*}
\EE [f] (y) &= \int_{\R^{n-k}}\frac{f(x)dx}{ |x-y|^\lambda |y''|^\beta} .
\end{align*}
Hence, our starting point is the following
\begin{equation}\label{eq-Head}
\begin{aligned}
\big\| \EE [f] \big\| _{L^q (\R^n)}^q 
&\lesssim
\int_{\R^{n-k}}\int_{0}^{+\infty}\left[\frac{\int_{B_\rho^{n-k} (y')} f(x) dx}{\rho^{\lambda-k/q+\beta}}\right]^q \frac{d\rho}{\rho} dy',
\end{aligned}
\end{equation}
thanks to \eqref{eqKeyKey}. In the sequel, we prove that
\begin{align*}
 \int_{\R^{n-k}}\int_{0}^{+\infty}\left[\frac{\int_{B_\rho^{n-k} (y')} f(x) dx}{\rho^{\lambda -k/p+\beta}}\right]^q \frac{d\rho}{\rho} dy'\lesssim \| f \| _*^{q-p} \| f\| _{L^p(\R^{n-k})}^p.
\end{align*}
To this purpose, recall the following estimate
	\begin{align*}
	\int_{B_\rho^{n-k} (y')} f(x) dx\lesssim \rho^{n-k} (Mf) (y'),
	\end{align*}
where $Mf$ is the Hardy--Littlewood maximal function of $f \geq 0$ in $\R^{n-k}$, defined by
\[
Mf (z) = \sup_{r>0} \frac 1{|B^{n-k}(z,r)|} \int_{|x-z| \leq r} f(x) dx.
\]
In addition, from the definition, we also have
\[
 \int_{B_\rho^{n-k} (y')} f(x) dx \lesssim \rho^{ \frac{n-k}{p'}} \| f \| _* 
\] 
for any $\rho>0$ and any $y' \in \R^{n-k}$. Thus, for some $\delta > 0$ to be determined later, we can estimate
\begin{align*}
\int_{0}^{+\infty}\Big[ & \frac{\int_{B_\rho^{n-k} (y')} f(x) dx}{\rho^{\lambda -k/q + \beta}}\Big]^q\frac{d\rho}{\rho} \\
&=\Big( \int_{0}^{\delta}+\int_{\delta}^{+\infty} \Big) \Big[\frac{\int_{B_\rho^{n-k} (y')} f(x) dx}{\rho^{\lambda -k/q+ \beta}}\Big]^q \frac{d\rho}{\rho} \\
&\lesssim \int_{0}^{\delta} \Big[\frac{ \rho^{n-k} (Mf) (y')}{\rho^{\lambda -k/q +\beta}}\Big]^q \frac{d\rho}{\rho} 
+\int_{\delta}^{+\infty} \Big[\frac{\rho^{(n-k)/p'} \| f\| _*}{\rho^{\lambda -k/q+ \beta}}\Big]^q \frac{d\rho}{\rho} \\
&\lesssim \delta^{(n-\lambda- \beta) q-(q-1)k} f^*(y')^q 
 + \delta^{(n-\lambda- \beta) q-(q-1)k-\frac{q(n-k)}{p}}\| f\| _*^q.
\end{align*}
To obtain the last line in the above estimate, we also note that
\begin{equation*}\label{eq-ChooseQ1}
\lambda- \frac kq + \beta - \frac{n-k}{p'} > 0,
\end{equation*}
thanks to \eqref{eq-Identity}; otherwise, the integral $\int_\delta^{+\infty}$ diverges. The trick is first to select $\delta$ such that
\begin{equation}\label{eq-ChooseDelta}
\delta^{(n-\lambda- \beta) q-(q-1)k} (Mf) (y')^q =\| f\| _*^{q-p} (Mf) (y')^p
\end{equation}
and then to select $q$ such that
\begin{equation}\label{eq-ChooseQ2}
\delta^{(n-\lambda- \beta) q-(q-1)k-\frac{q(n-k)}{p}}\| f\| _*^q =\| f\| _*^{q-p} (Mf) (y')^p.
\end{equation}
Indeed, to fulfill \eqref{eq-ChooseDelta}, we simply choose
\[
\delta=\Big[\frac{\| f\| _* }{Mf (y')}\Big]^{\frac{q-p}{(n-\lambda- \beta) q-(q-1)k}}.
\] 
From this choice of $\delta$ we deduce 
\[
\delta^{(n-\lambda- \beta) q-(q-1)k} 
=\Big[\frac{\| f\| _* }{Mf (y')}\Big]^{q-p},
\]
which immediately implies \eqref{eq-ChooseDelta}. Notice that
\begin{align*}
(n-\lambda- \beta) q-(q-1) & k-\frac{q(n-k)}{p}\\
=&\frac q{q-p} \big[(n-\lambda- \beta) q-(q-1)k \big] -\frac{q(n-k)}{p} \\
&-\frac p{q-p} \big[(n-\lambda- \beta)q - (q-1)k \big]\\
=& -\frac p{q-p} \big[(n-\lambda- \beta)q - (q-1)k \big],
\end{align*}
thanks to \eqref{eq-Identity}. Hence, we obtain
\begin{align*}
\delta^{(n-\lambda- \beta) q-(q-1)k-\frac{q(n-k)}{p}}
=\Big[\frac{\| f\| _* }{Mf (y')}\Big]^{-p},
\end{align*}
which yields
\[
\delta^{(n-\lambda) q-(q-1)k-\frac{q(N-k)}{p}}\| f\| _*^q
=\| f\| _*^{q-p} (Mf) (y')^p,
\]
which is nothing but \eqref{eq-ChooseQ2}. Thus, we arrive at
\begin{align*}
\int_{0}^{+\infty}\left[\frac{\int_{B_\rho^{n-k} (y')}|f(x)| dx}{\rho^{\lambda -k/q + \beta}}\right]^q\frac{d\rho}{\rho} \lesssim \| f\| _*^{q-p} (Mf) (y')^p.
\end{align*}
Since $\| f^*\| _{L^p(\R^{n-k})} \lesssim \| f\| _{L^p(\R^{n-k})}$, we deduce that
\begin{equation}\label{eq-Tail}
\begin{aligned}
\int_{\R^{n-k}}\int_{0}^{+\infty}\left[\frac{\int_{B_\rho^{n-k} (y')}|f(x)| dx}{\rho^{\lambda -k/q + \beta}}\right]^q \frac{d\rho}{\rho} dy'
&\lesssim \| f\| _*^{q-p}\int_{\R^{n-k}} (Mf) (y')^p dy' .
\end{aligned}
\end{equation}
Combining \eqref{eq-Head} and \eqref{eq-Tail} gives the desired estimate.



\begin{thebibliography}{wwwiwww}

\bibitem[Ada75]{Adams}
\textsc{D. Adams},
A note on Riesz potentials,
\textit{Duke Math. J.} \textbf{42} (1975) 765--778.

\bibitem[And55]{Anderson}
\textsc{T.W. Anderson} 
The integral of a symmetric unimodal function over a symmetric convex set and some probability inequalities,
\textit{Proc. Amer. Math. Soc.} \textbf{6} (1955) 170--176. 


\bibitem[Bec93]{beckner1993}
\textsc{W. Beckner},
Sharp Sobolev inequalities on the sphere and the Moser-Trudinger inequality,
\textit{Ann. of Math.} {\bf 138} (1993) 213--242.

\bibitem[BVNV14]{BVNV}
\textsc{M. Bidaut-Véron, Q.H. Nguyen, and L. Véron},
Quasilinear Lane--Emden equations with absorption and measure data,
\textit{J. Math. Pures Appl.} (9) \textbf{102} (2014) 315--337. 

\bibitem[BVHNV15]{BVHNV}
\textsc{M. Bidaut-Véron, G. Hoang, Q.H. Nguyen, and L. Véron},
An elliptic semilinear equation with source term and boundary measure data: the supercritical case, 
\textit{J. Funct. Anal.} \textbf{269} (2015) 1995--2017.



\bibitem[Bur09]{Bur}
\textsc{A. Burchard},
\textit{A short course on rearrangement inequalities}, June 2009.
Available at: \texttt{http://www.math.utoronto.ca/almut/rearrange.pdf.}



\bibitem[CCL10]{ccl2010}
\textsc{E. Carlen, J.A. Carrillo, and M. Loss},
Hardy--Littlewood--Sobolev inequalities via fast diffusion flows,
\textit{Proc. Natl. Acad. Sci. USA} \textbf{107} (2010) 19696--19701.

\bibitem[CL92]{CarlenLoss92}
\textsc{E. Carlen and M. Loss},
Competing symmetries, the logarithmic HLS inequality and Onofri's inequality on $S^n$,
\textit{Geom. Funct. Anal.} {\bf 2} (1992) 90--104.





\bibitem[DJ14]{DJ14}
\textsc{J. Dolbeault and G. Jankowiak},
Sobolev and Hardy-Littlewood-Sobolev inequalities,
\textit{J. Differential Equations} \textbf{257} (2014) 1689--1720. 

\bibitem[Dou16]{Dou2016}
\textsc{J. Dou},
Weighted Hardy--Littlewood--Sobolev inequalities on the upper half space,
\textit{Commun. Contemp. Math.} \textbf{18} (2016) Art. 1550067.

\bibitem[DQZ17]{DouQuoZhu}
\textsc{J. Dou, Q. Guo, and M. Zhu},
Subcritical approach to sharp Hardy-Littlewood-Sobolev type inequalities on the upper half space,
\textit{Adv. Math.} \textbf{312} (2017) 1--45. 

\bibitem[DZ15h]{DouZhu-upper}
\textsc{J. Dou and M. Zhu},
Sharp Hardy--Littlewood--Sobolev inequality on the upper half space,
\textit{Int. Math. Res. Not.} (2015) Vol. 2015, Issue 3, 651--687.

\bibitem[DZ15]{dz2014}
\textsc{J. Dou and M. Zhu},
Reversed Hardy--Littlewood--Sobolev inequality,
\textit{Int. Math. Res. Not.} (2015) Vol. 2015, Issue 19, 9696--9726.

\bibitem[FL10]{fl2010}
\textsc{R.L. Frank and E.H. Lieb},
Inversion positivity and the sharp Hardy--Littlewood--Sobolev inequality,
\textit{Calc. Var. Partial Differential Equations} \textbf{39} (2010) 85--99. 

\bibitem[FL11]{fl2011}
\textsc{R.L. Frank and E.H. Lieb},
Spherical reflection positivity and the Hardy-Littlewood-Sobolev inequality,
\textit{Concentration, functional inequalities and isoperimetry}, 89--102, 
Contemp. Math., \textbf{545}, 
Amer. Math. Soc., Providence, RI, 2011.

\bibitem[FL12a]{fl2012a}
\textsc{R.L. Frank and E.H. Lieb},
Sharp constant in several inequalities on Heisenberg group,
\textit{Ann. of Math.} {\bf 176} (2012) 349--381.

\bibitem[FL12b]{fl2012b}
\textsc{R.L. Frank and E.H. Lieb},
A new, rearrangement-free proof of the sharp Hardy-Littlewood-Sobolev inequality,
\textit{Spectral theory, function spaces and inequalities}, 55--67, 
Oper. Theory Adv. Appl., 219, Birkhäuser/Springer Basel AG, Basel, 2012. 



\bibitem[Glu20]{Gluck2020}
\textsc{M. Gluck},
Subcritical approach to conformally invariant extension operators on the upper half space,
\textit{J. Funct. Anal.} \textbf{278} (2020) 108082.

\bibitem[GZ19]{GZ2019}
\textsc{M. Gluck and M. Zhu},
An extension operator on bounded domains and applications,
\textit{Calc. Var. Partial Differential Equations} \textbf{58} (2019) Art. 79.

\bibitem[HLZ12]{HLZ12}
\textsc{X. Han, G. Lu, and Z. Zhu},
Hardy-Littlewood-Sobolev and Stein-Weiss inequalities and integral systems on the Heisenberg group,
\textit{Nonlinear Anal.} \textbf{75} (2012) 4296--4314. 

\bibitem[HZ15]{hanzhu}
\textsc{Y. Han and M. Zhu},
Hardy--Littlewood--Sobolev inequalities on compact Riemannian manifolds and applications,
\textit{J. Differential Equations} \textbf{260} (2016) 1--25.



\bibitem[HL28]{hl1928}
\textsc{G.H. Hardy and J.E. Littlewood},
Some properties of fractional integrals. I,
\textit{Math. Z.} \textbf{27} (1928) 565-606.

\bibitem[HL30]{hl1930}
\textsc{G.H. Hardy and J.E. Littlewood},
Notes on the theory of series (XII): On certain inequalities connected with the calculus of variations,
\textit{J. London Math. Soc.} \textbf{5} (1930) 34-39.

\bibitem[JN14]{JN14}
\textsc{G. Jankowiak and V.H. Nguyen},
\textit{Fractional Sobolev and Hardy-Littlewood-Sobolev inequalities},
arXiv:1404.1028.








\bibitem[Lie83]{l1983}
\textsc{E.H. Lieb},
Sharp constants in the Hardy--Littlewood--Sobolev and related inequalities,
\textit{Ann. of Math.} \textbf{118} (1983) 349--374.

\bibitem[LL01]{LiebLoss}
\textsc{E.H. Lieb and M. Loss},
Analysis,
\textit{2nd ed. Graduate studies in Mathematics {\bf 14}, Providence, RI: American Mathematical Sociery, 2001}.

\bibitem[Lio84]{Lions}
\textsc{P.L. Lions},
The concentration-compactness principle in the calculus of variations. The locally
compact case. I, 
\textit{Ann. Inst.H. Poincar\'e Anal. Non Lin\'eaire} \textbf{1} (1984) 109--145.

\bibitem[Liu20]{Liu2020}
\textsc{Z. Liu},
Hardy--Littlewood--Sobolev inequality and existence of the extremal functions with extended kernel, preprint.

\bibitem[Ngo20]{Ngo}
\textsc{Q.A. Ng\^o},
All conditions for Stein--Weiss inequalities are necessary, preprint, 2020.


\bibitem[PV08]{PV}
\textsc{N.C. Phuc and I. Verbitsky},
Quasilinear and Hessian equations of Lane--Emden type,
\textit{Ann. of Math.} (2) \textbf{168} (2008) 859--914.


\bibitem[Sob38]{sobolev1938}
\textsc{S.L. Sobolev},
On a theorem of functional analysis,
\textit{Math. Sb. (N.S.)} {\bf 4} (1938) 471-479. English transl. in \textit{Amer. Math. Soc. Transl. Ser. 2} {\bf 34} (1963) 39-68.

\bibitem[SDY14]{SDY2014}
\textsc{Z. Shi, W. Di, and D. Yan},
Necessary and sufficient conditions of doubly weighted Hardy--Littlewood--Sobolev inequality,
\textit{Anal. Theory Appl.} \textbf{30} (2014) 193--204.

\bibitem[SW58]{st1958}
\textsc{E.M. Stein and G. Weiss},
Fractional integrals in $n$-dimensional Euclidean space,
\textit{J. Math. Mech.} {\bf 7} (1958) 503-514.




\end{thebibliography}
\end{document}